\def\MR#1{}
\newtheorem{theorem}{Theorem}
\newtheorem{lemma}[theorem]{Lemma}
\newtheorem{corollary}[theorem]{Corollary}
\newtheorem{proposition}[theorem]{Proposition}
\theoremstyle{definition}
\newtheorem{definition}[theorem]{Definition}
\newtheorem{example}[]{Example}
\newtheoremstyle{teststyle} 
{10pt} 
{10pt} 
{\slshape} 
{} 
{\bfseries} 
{.} 
{ } 
{\thmname{#1}\thmnumber{ #2}\thmnote{ (#3)}} 
\theoremstyle{teststyle}
\newtheorem{test}{Test}
\newcommand{\fm}{\mathbb{F}[\mathcal{M}]}
\DeclareMathOperator{\Hom}{Hom}
\newcommand*{\barX}{\overline{X}}
\newcommand{\omk}{\Omega_K(s,t)}
\newcommand*{\co}{\colon\thinspace}
\newcommand*{\Z}{\mathbb{Z}}
\newcommand*{\Q}{\mathbb{Q}}
\newcommand*{\C}{\mathbb{C}}
\newcommand*{\F}{\mathbb{F}}
\newcommand*{\calk}{\mathcal{K}}
\newcommand*{\calm}{\mathcal{M}}
\newcommand*{\cals}{\mathcal{S}}
\newcommand*{\cs}{\mathbin{\#}}
\DeclareMathOperator{\ch}{char}
\DeclareMathOperator{\res}{Res}
\DeclareMathOperator{\GL}{GL}
\DeclareMathOperator{\HFK}{HFK}
\newcommand*{\HFKh}{\widehat{\HFK}}
\begin{document}
\title[Knot primality: knot Floer and twisted homology]{Knot primality: knot Floer homology, metacyclic representations, and twisted homology}

\author{Samantha Allen}
\address{Department of Mathematics and Computer Science, Duquesne University, Pittsburgh, PA 15282}
\email{\href{mailto:allens6@duq.edu}{allens6@duq.edu}}
\urladdr{\url{https://samantha-allen.github.io/}}

\author{Charles Livingston}
\address{Department of Mathematics, Indiana University, Bloomington, IN 47405}
\email{\href{mailto:livingst@iu.edu}{livingst@iu.edu}}
\urladdr{\url{https://chuck-livingston.github.io/math}}

\begin{abstract} 
We develop purely algebraic methods for proving that a knot is prime. Our approach uses the Heegaard Floer polynomial $\omk$ in conjunction with classical knot-theoretic methods: cyclic, dihedral, and metacyclic covering spaces. The theory of twisted homology allows us to view these approaches from a unified perspective. Collectively, the tests developed here have proved primality for  99.67\% of knots in a large family of prime knots that includes all prime knots with 15 or fewer crossings.

There are   additional ways in which our approach highlights the power of Heegaard Floer methods. For one, a single computation can prove the primality of an infinite family of knots.  We also illustrate the application of our approach to the setting of general three-manifolds by proving the primality of a knot in a nontrivial homology sphere.
\end{abstract}

\maketitle

\section{Introduction}

One of the central results in knot theory is the existence and uniqueness of prime decompositions of knots. Although the notion of primeness was formally defined by Schubert~\cite{MR31733} in 1949, the notion played a role in essentially all enumerations of knots, going back to Tait's 1885 work~\cite{tait1885knots3}.
The continuing centrality of prime decompositions in knot theory is reflected in several ways:
(1)~many recent developments specifically concern prime knots~\cite{2024arXiv240909032A, 2023arXiv230604812B, 2025arXiv250819669G, 2025arXiv250208229K, 2025arXiv250621105X};
(2)~the enumeration of prime knots is an ongoing effort, now complete through 20 crossings~\cite{burton:LIPIcs:2020:12183, MR4877260}; and
(3)~new approaches to knot-theoretic questions are routinely tested on prime knots~\cite{2025arXiv250818263C, 2025arXiv250810864C, 2025arXiv250519573G, 2025arXiv250922939M}.

Despite this central role of knot primality, the task of establishing that a given knot is prime has, until recently, been approached largely by ad hoc means.
The introduction of computational tools such as SnapPy~\cite{SnapPy} and Regina~\cite{burton:LIPIcs:2020:12183, regina}, which analyze knots through triangulations of their complements, changed the situation. This approach has been most effective for hyperbolic knots; satellite knots continued to be handled via ad hoc means,
such as in~\cite{burton:LIPIcs:2020:12183, MR4877260}.
That geometric approach culminated in the work of He, Sedgwick, and Spreer~\cite{MR4934408}, which describes a program that determines the prime decomposition of a knot.
These developments rest on the foundational results of Haken~\cite{MR141106}, which showed that decompositions of 3-manifolds, and hence of knots, may be algorithmically derived from triangulations.

In this paper we describe a purely algebraic approach to verifying that a knot is prime.
Our method combines knot Floer theory~\cite{MR2065507} with the use of metacyclic representations of knot groups to construct invariants, a technique that dates at least as far back as~\cite{MR717222,reidemeister_knotentheorie}.
The metacyclic knot invariants we use are interpreted in the context of twisted homology, permitting us to build more refined obstructions to decomposability and algorithms that are substantially faster in practice.
From a theoretical standpoint, this approach highlights a further unexpected connection between Heegaard Floer theory, classical representation methods, and twisted knot invariants. One implication of the algebraic nature of the approach is that a single computation can apply to families of knots. Another is that the approach applies immediately to knots in homology spheres.

From a computational standpoint, it yields an effective and rapid test for knot primality.

\begin{itemize}
\item \textbf{Effectiveness.}
For the full list of 15--crossing prime knots, our tests confirm primeness for 99.67\% of the entries.
Comparable effectiveness is observed for knots with much higher crossing numbers.

\item \textbf{Speed.}
At 13 crossings, the algorithm establishes primality in about 0.015 seconds. For 15-crossing knots it increases to about 0.025 seconds, which appears to be about 50 times faster than the program described in~\cite{MR4934408}. (These time comparisons take into account the difference in processor  speeds. See Section~\ref{sec-summary} for details.) For 30--crossing knots the computation requires about one second. Individual knots with up to 40 crossings remain tractable, far exceeding the range of knots considered in~\cite{MR4934408}.

\item \textbf{Satellite knots.}
The tests are particularly effective for satellite knots, for which past proofs of primality have often required detailed geometric arguments, such as in~\cite{MR4877260}.
For  the 376 satellite between 15 and 19 crossings, our program verifies primeness of  all of them in under four  seconds, whereas~\cite{MR4934408} reports a computation time of over 400 seconds (or roughly 200 seconds adjusting for processor speed).  Primeness of all 1,315 non-hyperbolic knots through 20 crossings  was verified in roughly 9 seconds.
\end{itemize}

Our work is complementary to that of~\cite{MR4934408}, which has the remarkable property of being able to certify that a knot is \emph{composite} as well as certifying primeness. On the other hand, the nature of our methods yields a program of substantially higher speed, yielding tractability for a wider class of knots. A natural long-term goal is to combine the two approaches into a single comprehensive system: our algebraic tests could serve as a rapid preliminary filter, followed by the geometric decomposition algorithms of~\cite{MR4934408}.
Such an integration has been discussed  with Alexander He, although development has not yet begun.

\smallskip

\noindent{\textbf{Outline.}} Section~\ref{sec-defhf} reviews basic background from Heegaard Floer theory and its application to develop initial primality tests. Section~\ref{section:jones} discusses an enhancement to these tests involving the Jones polynomial.  In Section~\ref{sec-branched-covers} those tools are combined with the use of cyclic branched coverings of knots to yield further tests. Metacyclic groups are studied in Section~\ref{sec-meta1} and Section~\ref{sec-meta2}. In Section~\ref{sec-twisted} twisted homology associated to metacyclic representations is discussed and Section~\ref{sec-basic} provides some basic examples.  Section~\ref{sec-mv} presents details of relevant Mayer-Vietoris arguments with twisted homology. Next, Section~\ref{sec-metatesting} describes the primality tests that arise from the use of twisted homology.

The remainder of the paper presents a mix of related topics. In Section~\ref{sec-finite} it is explained how using finite fields for coefficients rather than complex or cyclotomic fields can accelerate the computations. Section~\ref{sec-compmethods} provides some of the details of the computational methods used. In Section~\ref{sec-homologyspheres}, we discuss applications of the methods: extensions to knots in homology spheres and showing primeness of a family of knots.  Lastly, Section~\ref{sec-summary} summarizes the computational results and provides details on the computer processors used.

\smallskip

\noindent \textit{Acknowledgments.} Our joint work~\cite{2023arXiv231111089A} with Misha Temkin and Michael Wong included the observation that if the knot Floer polynomial of a knot is irreducible, then the knot is prime. That was the starting point of the research described here. Nathan Dunfield graciously advised us on the effective use of SnapPy.  Jonathan Hanselman described to us his enhancement of the Heegaard Floer program originally written by Zolt\'an Szab\'o. Morwen Thistlethwaite provided us with data from his enumeration of 20-crossing prime knots. Cameron Gordon gave us pointers to the historical background of knot primality. Hugo Zhou provided us with computations of the Heegaard Floer polynomials of knots in manifolds other than $S^3$.   Allen was supported by NSF award DMS-2532488.
\smallskip

\noindent\textit{AI-use disclosure.}  The authors used  AI language models, predominantly ChatGPT and Claude, to assist with proofreading, organization, the identification of passages that might benefit from additional explanation or verification, and to review Sage programs. The authors reviewed and revised all such output and take full responsibility for the mathematical content and final text.


\section{Knot Floer polynomial and preliminary examples}\label{sec-defhf}
The \textit{Heegaard Floer knot polynomial} is defined to be $\omk = \sum c_{i,j}s^jt^i \in \Z[t,t^{-1},s, s^{-1}]$, where $c_{i,j}= \dim \HFKh_j(K, \F_2, i)$. These homology groups were defined in~\cite{MR2065507}, where it was proved that $c_{i,j} = c_{-i, j-2i}$ for all relevant $(i,j)$. We say that a factorization of a two-variable Laurent polynomial is \textit{positive-symmetric} if the coefficients of each factor satisfy this symmetry condition and \textit{all coefficients are nonnegative}. 
A polynomial is called \textit{positive-symmetric irreducible} if it does not factor into nontrivial positive-symmetric factors.

We use the following facts about $\omk$:
\begin{enumerate}
\item $\Omega_K(-1,t) = \Delta_K(t)$, the symmetrized Alexander polynomial of $K$.\smallskip
\item $\omk = 1$ if and only if $K$ is trivial.\smallskip
\item $\Omega_K$ is multiplicative under connected sum: $\Omega_{K \cs J}(s,t) = \omk \Omega_J(s,t)$.
\end{enumerate}

\noindent{\textbf{Notation.}} We can write $\Omega_{K}(s,t)$ uniquely as $s^it^j \sum_{m = 0}^n f_m(s)t^m$, where for each $m$, $f_m(s) \in \Z[s]$ and $f_0(0) > 0$. We write this as $\Omega_{K}(s,t) \doteq \sum_{m = 0}^n f_m(s)t^m$. The symmetry and reducibility criteria are easily translated to this normalized setting.

\subsection{The basic Heegaard Floer polynomial primeness test}
The properties of $\omk$ listed above immediately lead to the primality test developed in~\cite{2023arXiv231111089A}.

\begin{test}\label{test-1} If $\omk$ is positive-symmetric irreducible, then $K$ is prime.
\end{test}
\noindent As described in~\cite{2023arXiv231111089A}, this test alone proves the primality of about 80\% of large random samples of knots.

\subsection{Three simple examples}
The following three examples illustrate the core ideas behind the tests we will develop. For these examples, we use branched rather than unbranched covers and omit twisted homology to simplify the exposition. In the remainder of the paper, we switch to unbranched covers and twisted homology, which yield stronger results and faster algorithms.

Each of these three knots is a 2-bridge knot and thus is prime by work of Schubert~\cite{MR82104}. We use them here to keep this initial exposition short.

\begin{example}
The trefoil knot, $K = T(2,3)$, has knot Floer polynomial $\Omega_K(s,t) \doteq s^2 t^2 + st + 1$. This is irreducible, and Test~\ref{test-1} applies.
\end{example}

\begin{example}
The torus knot $K = T(2,9)$ has $\Omega_K(s,t)$ with irreducible factorization $\Omega_K(s,t) \doteq (s^2t^2 + st + 1)(s^6t^6 + s^3t^3 + 1)$. If $K$ were composite, its two factors would have Alexander polynomials $(t^2 - t + 1)$ and $(t^6 - t^3 + 1)$, obtained by setting $s = -1$. This would imply that the 2-fold branched covers of the components would each have first homology $\Z_3$, obtained by setting $t = -1$. But the 2-fold cover of $T(2,9)$ has first homology $\Z_9$, while $\Z_9\ne \Z_3 \oplus \Z_3$. \end{example}

\begin{example}
The knot $K = 9_{12}$ has $\Omega_K(s,t) \doteq (s^2t^2 + 3st + 1)(2s^6t^6 + 3s^3t^3 + 2)$. If $K$ were a nontrivial connected sum $K_1 \cs K_2$, the first homology of the 2--fold branched covers would be $\Z_5$ and $\Z_7$ (which does not contradict the fact that $9_{12}$ has 2--fold branched cover $M$ with first homology $\Z_{35}$). The connected 35-fold covering space of $M$, $\widetilde{M}$,  would split along 35 2-spheres into 7 copies of the punctured 5-fold branched cover of $K_1$ and 5 copies of the punctured $7$-fold branched cover of $K_2$.    An elementary Mayer--Vietoris argument based on this  hypothesized connected sum decomposition of $\widetilde{M}$ shows that the rank of the rational first homology of this cover would be at least 24. But for $9_{12}$, $M$ is the lens space $L(35,13)$, which has 35-fold cover $\widetilde{M} = S^3$.\end{example}


\section{A Jones polynomial--Heegaard Floer polynomial primeness test} \label{section:jones}
We now describe an enhancement of Test~\ref{test-1} that was described in~\cite{2023arXiv231111089A}.
It is known that some knots are uniquely determined by their Heegaard Floer knot polynomials (and some are not~\cite{MR3782416}). Knots that are determined by their polynomials include $\{3_1, 4_1, 5_1, 5_2\}$;
see~\cite{2023arXiv231111089A, MR4760447, fsivek, MR2450204}
for details and more examples. As observed in~\cite{2023arXiv231111089A}, if $K = K_1 \cs K_2$ and the Heegaard Floer knot polynomial of one of the summands is also the polynomial of one of the knots in the full list, then one of $K_1$ or $K_2$ must be a knot $J$ in the list. This in turn would imply that the Jones polynomial of $J$ divides that of $K$. The same is true for any polynomial-valued knot invariant that behaves well under connected sum.

\begin{test}\label{test3}
If $K = K_1 \cs K_2$ is a nontrivial factorization and $\Omega_{K_i}(s,t) = \Omega_L(s,t)$ for any knot $L$ that is determined by its Heegaard Floer knot polynomial, then the Jones, Homfly, Kauffman, and properly normalized Khovanov polynomial of $L$ would divide the corresponding polynomial of $K$.
\end{test}

In our programs we have restricted to the use of Jones and Homfly tests, since the (twisted) homology computations are faster and are as effective   tools as the more sophisticated polynomial tests. As described in~\cite{2023arXiv231111089A} (restricting to only the Jones polynomial), the effectiveness of the combination of Tests~\ref{test-1} and~\ref{test3} has been about 87\%.


\section{Branched cyclic covers}\label{sec-branched-covers}

Before turning to metacyclic representations and twisted homology, we introduce higher-fold branched covers and their use along with $\omk$ to build some basic primality tests.

We will use $X(K)$ to denote the exterior of $K \subset S^3$, a compact 3-manifold with boundary a two-torus; $X_p(K)$ will denote the $p$-fold cyclic cover of $X(K)$ and $\barX_p(K)$ will denote the $p$-fold cyclic branched cover. The focus of this section is on the first homology groups, $H_1(\barX_p(K), \Z)$. These can be computed using standard classical knot theory techniques, as described in standard references~\cite{MR3156509, MR0515288}.
In particular, there is Fox's theorem~\cite{MR95876}, which we now state. The fact that prime-power covers are rational homology spheres was first noted by Fox in~\cite{MR0140099}.

\begin{proposition}
For a positive integer $p$ and knot $K$, $\big| H_1(\barX_p(K))\big| = \big| \Delta_K(t) \big|_p$, where
\[ \big| f(t) \big|_p = \big| \prod_{i = 0}^{p-1} f(\zeta_p^i)\big| \in \Z. \]
In this formula, $\zeta_p$ is a primitive $p$-th root of unity in $\C$.
If the product equals 0, then the order is infinite. If $p$ is a prime power, then the product does not equal 0.
\end{proposition}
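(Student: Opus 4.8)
The plan is to compute $\big|H_1(\barX_p(K))\big|$ by passing to the infinite cyclic cover $\tildeX_\infty(K)$ of $X(K)$ and reducing everything to linear algebra over the Laurent polynomial ring $\Lambda=\Z[t,t^{-1}]$; the roots of unity in the statement then enter through the Chinese Remainder Theorem applied to $\Z[t]/(t^{p}-1)$.

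First I would recall that the Alexander module $H_1(\tildeX_\infty(K);\Z)$ admits a \emph{square} presentation matrix over $\Lambda$: from a Seifert surface of genus $g$ with Seifert matrix $V$, the Mayer--Vietoris sequence for the standard decomposition of $\tildeX_\infty(K)$ into cut-open copies of $S^3$ gives $H_1(\tildeX_\infty(K);\Z)\cong\operatorname{coker}\!\big(A(t)\colon\Lambda^{2g}\to\Lambda^{2g}\big)$ with $A(t)=tV-V^{T}$, and $\det A(t)\doteq\Delta_K(t)$ (equality up to a unit $\pm t^{k}$). Next I would invoke the classical identification $H_1(\barX_p(K);\Z)\cong H_1(\tildeX_\infty(K);\Z)\big/(t^{p}-1)\,H_1(\tildeX_\infty(K);\Z)$: it follows from the Wang sequence of the $\Z$-cover $\tildeX_\infty(K)\to X_p(K)$, on which the generator of the deck group acts as $t^{p}$, giving a short exact sequence $0\to H_1(\tildeX_\infty(K))/(t^{p}-1)\to H_1(X_p(K))\to\Z\to0$, together with the fact that the $\Z$ summand is killed by the Dehn filling that reinserts a neighborhood of the branch locus to form $\barX_p(K)$. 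Tensoring the presentation of the Alexander module with $\Lambda/(t^{p}-1)$ and using right-exactness then yields $H_1(\barX_p(K);\Z)\cong\operatorname{coker}\!\big(\bar A\colon R^{2g}\to R^{2g}\big)$, where $R=\Z[t]/(t^{p}-1)$ and $\bar A$ is $A(t)$ reduced modulo $t^{p}-1$.

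For the order, observe that $R$ is free of rank $p$ over $\Z$, so $\bar A$ is a $\Z$-linear endomorphism of a free abelian group of rank $2gp$; its cokernel is finite exactly when $\det_\Z(\bar A)\neq0$, and then has order $|\det_\Z(\bar A)|$. To evaluate this determinant I would base-change to $\C$: the Chinese Remainder Theorem gives a $\C$-algebra isomorphism $R\otimes_\Z\C\cong\prod_{i=0}^{p-1}\C$ sending $t$ to $(\zeta_p^{i})_i$, under which $\bar A\otimes\C$ becomes block diagonal with blocks $A(\zeta_p^{i})$. Hence $\det_\Z(\bar A)=\prod_{i=0}^{p-1}\det A(\zeta_p^{i})=\prod_{i=0}^{p-1}\Delta_K(\zeta_p^{i})$ up to a unit, and passing to absolute values both removes that ambiguity and makes the expression independent of the $\pm t^{k}$-normalization of $\Delta_K$. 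This gives $\big|H_1(\barX_p(K))\big|=\big|\Delta_K(t)\big|_p$, with the order infinite precisely when the product vanishes.

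Finally, for $p=q^{m}$ with $q$ prime, the relevant roots of unity are the primitive $q^{k}$-th roots for $0\le k\le m$. For $k=0$ one has $\Delta_K(1)=\det(V-V^{T})=\pm1\neq0$, since $V-V^{T}$ is the unimodular intersection form of the Seifert surface; and for $k\ge1$, if $\Delta_K$ vanished at a primitive $q^{k}$-th root of unity then $\Phi_{q^{k}}(t)$, being the monic minimal polynomial, would divide $\Delta_K(t)$ in $\Z[t]$ by Gauss's lemma, so $q=\Phi_{q^{k}}(1)$ would divide $\Delta_K(1)=\pm1$, which is absurd. Thus the product is nonzero and the order finite. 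The one step that requires real care is the identification in the second paragraph: correctly accounting for the meridional $\Z$ in $H_1(X_p(K))$ and checking that the filling kills precisely it, so that no spurious factor is introduced into the order; the remaining ingredients are routine homological algebra over $\Lambda$ and elementary cyclotomic arithmetic.
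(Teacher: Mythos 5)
Your proof is correct, but note that the paper does not actually prove this proposition: it is stated as a restatement of Fox's theorem, with the finiteness for prime-power covers also attributed to Fox, and both are simply cited. So there is no in-paper argument to match; what you have written is a complete, self-contained proof along the standard classical lines. Your route --- square presentation $tV-V^{T}$ of the Alexander module from a Seifert surface, the Wang sequence identification $H_1(\barX_p(K))\cong H_1(\tildeX_\infty(K))/(t^p-1)$ (with the meridional $\Z$ killed by the filling, which you correctly flag as the step needing care, and which works because the sequence onto $\Z$ splits), right-exactness of $\otimes\,\Z[t]/(t^p-1)$, and the CRT/determinant computation giving $\prod_i\Delta_K(\zeta_p^i)$ up to a unit --- is sound, and your cyclotomic argument for the prime-power case ($\Phi_{q^k}(1)=q$ cannot divide $\Delta_K(1)=\pm1$) is the standard one and is correct. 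It is worth observing that your determinant-over-$\Z[t]/(t^p-1)$ computation is exactly the resultant identity $\big|H_1(\barX_p(K))\big|=\pm\res(\Delta_K,\,t^p-1)$ that the paper invokes later in its computational section via the Sylvester matrix, so your argument also supplies a proof of that reformulation; the only cosmetic caveat is to keep track (as you do, by taking absolute values) of the unit ambiguity $\pm t^k$ in $\Delta_K$ and of the sign relating the $\Z$-determinant to the complex block-diagonal product.
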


A second basic fact about the homology of branched covers is a theorem of Plans; see~\cite{MR267567, MR0056923, MR0515288}.

\begin{theorem}
For $n$ odd, the torsion subgroup of $H_1(\barX_n(K), \Z)$ is of the form $G \oplus G$ for some finite abelian group $G$.
\end{theorem}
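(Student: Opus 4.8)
The plan is to study the branched cover $\barX_n(K)$ together with the action of the covering transformation group $\Z/n$ acting on $H_1(\barX_n(K),\Z)$. First I would pass to the infinite cyclic cover $\widetilde{X}$ of the knot exterior and recall that $H_1(\widetilde{X};\Q)$ is a finite-dimensional $\Q[t,t^{-1}]$-module (the rational Alexander module), which is annihilated by the Alexander polynomial $\Delta_K(t)$. Milnor's exact sequence (or a direct transfer argument) identifies $H_1(\barX_n(K);\Q)$ with the quotient of this module by multiplication by $t^n-1$, equivalently with $H_1(\widetilde X;\Q)\otimes_{\Q[t,t^{-1}]}\Q[t,t^{-1}]/(t^n-1)$ — here one uses that the branch locus is a single circle, so the contribution of the knot itself washes out rationally. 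The key structural input, due originally to Blanchfield, is that the Alexander module carries a nonsingular $(-1)$-Hermitian linking pairing with values in $\Q(t)/\Q[t,t^{-1}]$, equivariant with respect to the involution $t\mapsto t^{-1}$.

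The heart of the argument is the following observation about the $\Z/n$-module structure when $n$ is odd. Decompose $H_1(\barX_n(K);\Q)$ into isotypic pieces under the $\Z/n$-action: one piece $V_1$ on which $t$ acts trivially (corresponding to the divisor $t-1$ of $t^n-1$), and the remaining piece which splits according to the factorization of $(t^n-1)/(t-1)$ into $\Q$-irreducible cyclotomic factors $\Phi_d(t)$ for $d\mid n$, $d>1$. Because $n$ is odd, $d$ and $2d$ never both divide $n$, so the involution $t\mapsto t^{-1}$ sends each $\Phi_d$-isotypic piece to itself (since $\zeta_d^{-1}$ is again a primitive $d$-th root of unity). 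On the $V_1$ piece the branched cover restricts to the claim that $H_1$ of the double branched cover is of the form "square" — but more directly, on $V_1$ the Blanchfield pairing degenerates in a controlled way and one shows $V_1$ does not obstruct the conclusion once integrality is handled. For each $d>1$, the summand is a module over $\Q[t,t^{-1}]/\Phi_d(t)\cong\Q(\zeta_d)$, and the $(-1)$-Hermitian Blanchfield pairing becomes a nonsingular skew-Hermitian form over the number field $\Q(\zeta_d)$ with its complex-conjugation-type involution; a skew-Hermitian (or symplectic) space over a field has even dimension, forcing the $\Q(\zeta_d)$-dimension of that summand to be even, hence that summand is isomorphic to $W\oplus W$ as a $\Q[t,t^{-1}]$-module.

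To upgrade from $\Q$-coefficients to the integral statement about the torsion subgroup $T$ of $H_1(\barX_n(K),\Z)$, I would invoke the integral Blanchfield linking pairing $\lambda\co T\times T\to\Q/\Z$, which is a nonsingular, $(-1)$-symmetric, $\Z/n$-equivariant pairing on the finite group $T$. The plan is to run the same isotypic decomposition of $T$ as a $\Z[\Z/n]$-module and observe that, since $n$ is odd, the summands again pair off in dual pairs except on the $(t\mapsto t^{-1})$-invariant part, where one gets a nonsingular skew-symmetric linking form on a finite abelian group, and such a form forces the group to be of the form $G\oplus G$ (this is the standard classification of nonsingular alternating forms on finite abelian groups). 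Assembling the pieces gives $T\cong G\oplus G$.

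The main obstacle I expect is the passage from the clean $\Q$-coefficient statement to the integral one: controlling the $p$-primary part of $T$ for primes $p$ dividing $n$, where the group ring $\Z[\Z/n]$ is not semisimple and the isotypic decomposition can fail. I would handle this by separating $T=T_{p\mid n}\oplus T_{p\nmid n}$; on the prime-to-$n$ part the decomposition goes through and the skew-symmetric linking form argument applies verbatim, while on the remaining part one argues directly from the equivariant linking form and the oddness of $n$ (which guarantees $-1$ is not a square root of $1$ issue does not arise and that $2$ is invertible, so "skew-symmetric" genuinely means "alternating"). A careful but routine bookkeeping of these cases completes the proof.
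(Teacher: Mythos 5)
The paper itself offers no proof of this statement --- it is Plans' theorem, cited to the literature --- so your sketch has to stand on its own, and its decisive step does not. The torsion linking form of a closed oriented $3$-manifold such as $\barX_n(K)$ is \emph{symmetric}, not ``$(-1)$-symmetric'': lens spaces $L(p,q)$ carry a nonsingular linking form on the cyclic group $\Z_p$, which is certainly not of the form $G\oplus G$. So the classification of nonsingular alternating forms on finite groups, which is what you invoke to conclude $T\cong G\oplus G$, has nothing to apply to. The entire content of Plans' theorem is to \emph{manufacture} a skew structure out of the symmetric linking form $\lambda$ by using the deck transformation --- e.g.\ the pairing $\mu(x,y)=\lambda\bigl(x,(t-t^{-1})y\bigr)$ is skew-symmetric because $\lambda(tx,ty)=\lambda(x,y)$ --- and then to prove that it is nondegenerate, i.e.\ that $(t-1)(t+1)$ acts invertibly on the torsion. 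Invertibility of $t-1$ is standard, but the factor $t+1$ is exactly where the $2$-primary torsion must be handled, and this is the part your sketch waves away: your claim that oddness of $n$ ``guarantees $2$ is invertible'' is false, since $2$-torsion does occur for odd $n$ (the $3$-fold branched cover of the trefoil has $H_1\cong \Z_2\oplus\Z_2$). One only gets $(t+1)u=t^n+1=2$ on the module, so the $2$-primary part needs a genuinely separate argument (as in Plans' or Gordon's treatments via Seifert matrices); supplying it is the theorem, not routine bookkeeping.

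The rational/Blanchfield portion is also flawed, though less importantly: a nonsingular skew-Hermitian form over $\Q(\zeta_d)$ with complex conjugation does \emph{not} force even dimension --- the rank-one form represented by the purely imaginary element $\zeta_d-\zeta_d^{-1}$ is nonsingular and skew-Hermitian --- so ``skew-Hermitian spaces over a field have even dimension'' is wrong; only symplectic forms (alternating, trivial involution) have that property. Moreover that whole discussion concerns $H_1(\barX_n(K);\Q)$, i.e.\ the free part, whereas the statement to be proved is about the torsion subgroup, so even if repaired it would not advance the proof. In short, both key steps have genuine gaps; a correct argument in the spirit you propose exists, but it must run through the equivariantly twisted skew pairing and a separate analysis of the $2$-primary torsion, neither of which appears in the sketch.
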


We have the following primality test.

\begin{test} \label{test2}
Suppose that $K$ is a nontrivial connected sum. Then there is a nontrivial positive-symmetric factorization $\omk = \Omega_1(s,t)\Omega_2(s,t)$ such that for all primes $p > 0$,
the group $H_1(\barX_p(K))$ splits as a direct sum of groups of orders $\big| \Omega_1(-1, t) \big|_p$ and $\big| \Omega_2(-1, t) \big|_p$.
\end{test}

\begin{example}
Applying this test often breaks into cases. Here is a simple example.
For the knot $K = 11_6$, we have $H_1(\barX_2(K)) \cong \Z_{27} \oplus \Z_{5}$ and $H_1(\barX_3(K)) \cong (\Z_{8} \oplus \Z_{7})^2$. The symmetric irreducible factorization of the Heegaard Floer polynomial is
\[\omk \doteq (s^2 t^2 + s t + 1)(s^2 t^2 + 3 s t + 1)(2 s^2t^2 + 5 s t + 2).\]

There are three nontrivial factorizations of $\omk$ into two factors. We consider each factorization and compare the implied information about the homology groups of the covers to the actual groups.
(1)~If one of the two factors is $s^2t^2 + st + 1$, then $H_1(\barX_2(K))$ would have a summand of order~3; there is no such summand.
(2)~If one of the two factors is $s^2t^2 + 3st + 1$, then $H_1(\barX_3(K))$ would have a summand $G \oplus G$ where $G$ is of order~4; again, no such summand exists.
(3)~If one of the two factors is $2s^2t^2 + 5st + 2$, then $H_1(\barX_2(K))$ would have a summand of order~9; this too does not occur.
\end{example}


\section{Metacyclic groups and their representations}\label{sec-meta1}

In early knot theory research, metacyclic groups were used as tools in distinguishing knots and identifying their symmetries. In this section we describe these groups and their canonical $p$-dimensional representations. The next section describes the theory of representations of knot groups into metacyclic groups.

\subsection{Metacyclic groups}
Metacyclic groups are defined by:
\[\calm(d, p, a) = \langle r, t \ \big| \ r^d = 1,\ t^p = 1,\ trt^{-1} = r^a \rangle,\]
where $a^p \equiv 1 \pmod d$.  The presentation depends only on the value of $a \pmod d$.  We will always work in a setting in which $p$ is prime, $d$ is relatively prime to $p$, and $\gcd(a-1,d) =1$.  In the case that $d = 1$ we have $\calm(1,p,1) \cong \Z_p$.  Theorem~\ref{thm-meta2} motivates the condition that $\gcd(a-1,d) = 1$.  Conjugation by $t$ induces an automorphism of the cyclic group $\langle r \rangle$ of order dividing $p$, either 1 or $p$.  When $d>1$, these assumptions imply that the class of $a$ in $(\Z/d)^\times$ has order exactly $p$.  Henceforth we will assume that all metacyclic groups satisfy these additional  constraints.  In any other case we will refer to a \textit{general metacyclic group}.

There are two special cases.
\begin{itemize}
\item In the case $d = 1$, we have $\calm(1,p,1) \cong \Z_p$.  
\item In the case $p = 2$, we have $\calm(d,2,-1)$, which is the dihedral group with $2d$ elements,   \[D_{2d} = \langle r ,t\ \big|\ r^d = 1,\ t^2 = 1,\ trt^{-1} = r^{-1}\rangle.\]
\end{itemize}

\subsection{Normal generators and metacyclic quotients}  Knot groups are normally generated by a meridian to the knot.  We will be focused on surjective representations of knot groups to metacyclic groups that send the meridian to  $t$.  Thus, the following simple observation will be useful.  

\begin{theorem} \label{thm-meta2} The general group $\calm(d,p,a)$ is normally generated by $t$ if and 
only if $\gcd(a-1,d) = 1$.  In that case, the conjugacy class of $t$ consists of elements of the form $r^it$ for $0 \le i <d$.
\end{theorem}
\begin{proof}  The quotient of $\calm(d,p,a)$ by the normal closure of the subgroup $\langle t \rangle$ is presented by $\langle r \ \big| \ r^d = 1,  r^{a-1} =1\rangle \cong  \langle r \ \big| \ r^{\gcd(d, a-1) } = 1\rangle$.   Similarly, conjugates of $t^m$  are of the form $r^{-k}t^m r^k = r^{k(a^m -1)}t^m$ for some $k$.  If $\gcd(a-1,d)=1$, then $k(a-1)$ runs over all residue classes modulo
$d$ as $k$ varies. Thus every element $r^it$ is conjugate to $t$.
\end{proof}

We will use the following  elementary facts about metacyclic groups.
 \begin{theorem} If $d= d_1d_2$ with $d_1$ and $d_2$ relatively prime, then the subgroup $\langle r^{d_1}, t\rangle$ is  isomorphic to $\calm(d_2,p,a)$. The subgroup $\langle r^{d_1} \rangle$ is normal;  the quotient of the metacyclic group by this subgroup is isomorphic to $\calm(d_1,p,a)$.
 \end{theorem}\begin{proof} The subgroup $\langle r^{d_1},t\rangle$ has generators satisfying \[ (r^{d_1})^{d_2}=1,\qquad t^p=1,\qquad tr^{d_1}t^{-1}=(r^{d_1})^a, \] and the normal form shows that it is isomorphic to $\calm(d_2,p,a)$. The subgroup $\langle r^{d_1}\rangle$ is invariant under conjugation by $r$ and by $t$, since \[ t r^{d_1} t^{-1}=r^{ad_1}=(r^{d_1})^a. \] Thus it is normal. The quotient imposes the relation $r^{d_1}=1$, giving $\calm(d_1,p,a)$, with $a$ viewed modulo $d_1$. 
 \end{proof}

\subsection{Representations of metacyclic groups}
Let $\F$ be a field containing $d$ distinct $d$-th roots of unity.
Examples include cyclotomic fields $\Q(\zeta_d)$ and finite fields of order~$q$, $\F_q$, where $q$ is a prime satisfying $q - 1 \equiv 0 \pmod d$. Elementary exercises show that these conditions imply that $d \ne 0 \in \F$ and that the multiplicative group of $d$-th roots of unity is cyclic. We will use $\zeta_d$, or simply $\zeta$ when possible, to denote a generator of the group of roots of unity.

There is an injective homomorphism $\Psi \colon \calm(d,p,a) \to \GL(p, \F)$, the set of invertible $p \times p$ matrices with coefficients in $\F$; $\Psi$ is defined by the maps illustrated below in the case that $p = 3$.

\[
t \,\mapsto\,
\begin{pmatrix}
0 & 1 & 0 \\
0 & 0 & 1 \\
1 & 0 & 0
\end{pmatrix}
\hskip .7in
r \,\mapsto\,
\begin{pmatrix}
\zeta_d & 0 & 0 \\
0 & \zeta_d^a & 0 \\
0 & 0 & \zeta_d^{a^2}
\end{pmatrix}.
\]

The proof that $\Psi$ is well-defined is a simple check that these matrices satisfy the defining relations of $\calm(d,p,a)$.

Henceforth we will refer to this as the {\em canonical linear representation} of $\calm(d,p,a)$.


\section{Metacyclic representations of knot groups}\label{sec-meta2}
We abbreviate $\pi_K = \pi_1(S^3 \setminus K)$.
For readers who are unfamiliar with metacyclic representations of knot groups, the main point of this section is that finding such representations is equivalent to labeling the arcs of a knot diagram in a certain way, and that finding such labelings is a simple linear algebra exercise over the field $\F_d$ when $d$ is prime.
We will be primarily interested in the conjugacy class of the representation, which offers some further simplifications.
\subsection{Metacyclic representations}
A homomorphism $\chi \co \pi_K \to \calm(d,p,a)$ is called a \textit{metacyclic representation} if $\chi(m) = tr^\alpha$ for some meridian $m$ and for some $\alpha \in \Z_d$.
Any conjugate of an element $tr^i$ is also of this form.
Since the meridians of a knot are all conjugate, we have that all meridians map to elements of this form.

Given an oriented diagram of $K$ with $c$ crossings, the meridians of the arcs of the diagram can be labeled $x_i$, $0 \le i < c$, where $c$ is the crossing number of the diagram.
The Wirtinger presentation of $\pi_K$ is generated by the $x_i$, and each crossing in the diagram yields a relation of the form $x_i x_j x_i^{-1} x_k^{-1}$.
Any one of these relations is a consequence of the others.
Thus, for a fixed diagram of $K$, a Wirtinger presentation of $\pi_K$ will have $c$ generators and $c-1$ relations.
If the meridians $x_i$ map to $tr^{\alpha_i}$ under a representation $\chi$, then applying $\chi$ to the Wirtinger relation $x_i x_j x_i^{-1} x_k^{-1}$ implies
\[
(1 - a)\alpha_i + a\alpha_j - \alpha_k \equiv 0 \pmod d.
\]
Conversely, any set of values $tr^{\alpha_i}$ for which the $\alpha_i$ satisfy these linear relations in $\Z_d$ determines a $\calm(d,p,a)$ representation of $\pi_K$.

   Adding a constant to each label  of a solution gives a new solution. Under our standing
assumption $\gcd(a-1,d)=1$, adding any fixed constant corresponds to
conjugating the representation by a suitable power of $r$.  Thus in searching for solutions one can restrict to the case of   $\alpha_0 = 0 \in \Z_d$. Thus, finding all $\calm(d,p,a)$ representations of $\pi_K$ is equivalent to finding all solutions to a linear system of $c-1$ equations in $c-1$ variables, where $c$ is the crossing number of the diagram.
This is most easily solved in the case that $d$ is prime, since computer algebra systems have fast routines for performing linear algebra over fields of prime order.
Some of our results require that we work in the case that $d = d_1d_2$ is the product of two distinct primes.
This can be handled by working with each prime separately and then combining the result by using the Chinese Remainder Theorem.

\subsection{Metacyclic knot group representations and branched covers}
For a prime $d$, metacyclic representations can be identified with a subgroup of $\Hom(H_1(\barX_p(K)),\Z_d)$, as we now describe.

\begin{theorem}\label{thm-br}
For a knot $K$ with meridian $m \in \pi_K$, the set $\cals$ of representations $\chi \co \pi_K \to \calm(d,p,a)$ for which $\chi(m) = t$ is in bijective correspondence with
\[
\cals' = \big\{\, \psi \co H_1(\barX_p(K)) \to \Z_d \ \big| \ \psi(T_*x) = a\psi(x) 
\text{ for all $x$} \,\big\},
\]
where $\barX_p(K)$ is the $p$-fold branched cover of $K$, and $T_*$ is the map induced by the deck transformation.
\end{theorem}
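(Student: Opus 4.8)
**The plan is to set up a Wirtinger-style calculation on one side and a covering-space calculation on the other, and exhibit a natural map between the two sets.** The left-hand set consists of homomorphisms $\chi \co \pi_K \to \calm(d,p,a)$ with $\chi(m) = t$ for a fixed meridian $m$. Since $\calm(d,p,a)$ has the cyclic normal subgroup $N = \langle r \rangle \cong \Z_d$ with quotient $\Z_p$ generated by the image of $t$, any such $\chi$ determines a surjection $\pi_K \to \Z_p$ (because $\chi(m)$ generates the quotient), and that surjection is forced: $\pi_K \to H_1(\pi_K) \cong \Z \to \Z_p$, the unique one sending $m \mapsto 1$. So every $\chi$ in our set lifts a \emph{single} fixed map $\pi_K \twoheadrightarrow \Z_p$. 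The first step is therefore to identify the kernel of this map with $\pi_1(X_p(K))$, the fundamental group of the $p$-fold cyclic cover of the exterior, and to recall that $H_1(X_p(K)) \to H_1(\barX_p(K))$ is an isomorphism onto the relevant summand once we account for the class of the meridian (which becomes, after filling in the branch locus, a torsion-free class that dies in $\barX_p(K)$; more precisely $H_1(\barX_p(K))$ is the quotient of $H_1(X_p(K))$ by the lift of the meridian, and for our purposes the restriction of $\chi$ to $N$ must kill that meridian class since $\chi(m^p) = t^p = 1$ forces the lifted meridian into the kernel).

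**Step two is the translation itself.** Given $\chi$ in the left-hand set, restrict it to the kernel subgroup $\pi_1(X_p(K)) \trianglelefteq \pi_K$. Because this subgroup maps into $N = \langle r\rangle \cong \Z_d$ (elements of $\pi_1(X_p)$ map to the identity in $\Z_p$, hence into $N$), and $N$ is abelian, the restriction factors through $H_1(X_p(K))$, and after the identification above through $H_1(\barX_p(K))$. This produces a homomorphism $\psi \co H_1(\barX_p(K)) \to \Z_d$. The equivariance condition $\psi(T_* x) = a\,\psi(x)$ is exactly the shadow of the relation $t r t^{-1} = r^a$ in $\calm(d,p,a)$: conjugation by $\chi(m) = t$ on the normal subgroup $N$ is multiplication by $a$, and conjugation by $m$ on $\pi_1(X_p(K))$ induces precisely the deck transformation $T_*$ on homology (this is the standard identification of the monodromy of the cyclic cover with conjugation by the meridian). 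Conversely, given such a $\psi$, one builds $\chi$ by declaring $\chi(m) = t$ and, for any $g \in \pi_1(X_p(K))$, $\chi(g) = r^{\psi([g])}$; every element of $\pi_K$ is uniquely $m^k g$ with $0 \le k < p$ and $g \in \pi_1(X_p)$, so this defines $\chi$ on sets, and the equivariance condition on $\psi$ is exactly what is needed to check that $\chi$ respects the group law. The two constructions are visibly mutually inverse.

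**The main obstacle I anticipate is the bookkeeping around the branch locus**, i.e. getting the passage from $H_1(X_p(K))$ to $H_1(\barX_p(K))$ exactly right and checking it is harmless. One must verify that the meridian's lift to the cyclic cover, which is a single loop $\tilde{m}$ (when $\gcd$ conditions are right) or $p$ disjoint loops, maps to zero under any admissible $\psi$: this follows because in $\pi_K$ the element corresponding to $\tilde m$ is a conjugate of $m^p$ (or $m$ itself lies in $\pi_1(X_p)$ only after raising to the $p$-th power), and $\chi(m^p) = t^p = 1$, so the class is killed; dually, filling the solid torus that creates $\barX_p$ from $X_p$ kills exactly this class in $H_1$, so homomorphisms to $\Z_d$ on $H_1(\barX_p(K))$ are precisely those on $H_1(X_p(K))$ annihilating it. Once this compatibility is pinned down, the equivariance under $T_*$ and the bijection are formal. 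A clean way to organize the whole argument is to phrase it as: representations $\pi_K \to \calm(d,p,a)$ sending $m \mapsto t$ correspond to $\Z_p$-equivariant homomorphisms from $\pi_1(X_p(K))$ (with its conjugation $\Z_p$-action) to $\Z_d$ (with the $\Z_p$-action through $a$) that are trivial on the peripheral class, and then abelianize the source.
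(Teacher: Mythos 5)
Your proposal is correct and follows essentially the same route as the paper: fix the canonical surjection $\pi_K \to \Z_p$ sending $m$ to a generator, restrict $\chi$ to its kernel $\pi_1(X_p(K))$, factor through $H_1(X_p(K))$ since $\langle r\rangle$ is abelian, descend to $H_1(\barX_p(K))$ by killing the lifted meridian class, and invert via $\chi(m^k g) = t^k r^{\psi([g])}$ with the equivariance condition coming from $trt^{-1}=r^a$ and conjugation by $m$ realizing the deck transformation. The only (minor) divergence is your justification that the lifted meridian dies: you use $\chi(m^p)=t^p=1$ directly, whereas the paper argues from $T_*$-invariance of that class together with $a\not\equiv 1 \bmod d$; your version is, if anything, slightly cleaner.
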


\begin{proof}
The multiplicative subgroups $\langle t\rangle$ and $\langle r \rangle$ are isomorphic to the additive groups $\Z_p$ and $\Z_d$.  In order to be consistent with standard knot theory conventions (and the desire to treat homology groups as additive), in the following argument we will place the work in the additive context when possible.

Let $\rho \co \pi = \pi_K \to \Z_p \cong \langle t\rangle  \subset \calm(d,p,a)$ be the canonical map satisfying $\rho(m) = t$.
Any representation in $\cals$,  $\chi \co \pi_K \to \calm(d,p,a)$, restricts to give a map $\tau \co \calk \to \Z_d \cong \langle  r \rangle \subset \calm(d,p,a)$, where $\calk = \ker(\rho)$.
One sees  that $\tau$ satisfies $\tau(m\gamma m^{-1}) = a\tau(\gamma)$ via the following computation (which is now written multiplicatively):    $\chi(m\gamma m^{-1})=t r^{\tau(\gamma)}t^{-1}=r^{a\tau(\gamma)}$.

Notice that $\calk = \pi_1(X_p(K))$, where $X_p(K)$ is the $p$-fold cyclic cover of the knot exterior $X(K)$;
the branched cover $\barX_p(K)$ is obtained by attaching a solid torus to $X_p(K)$.
Since $\Z_d$ is abelian, the map $\tau$ must factor through a map $\tau' \co H_1(X_p(K)) \to \Z_d$ satisfying $\tau'(T_*x) = a\tau'(x)$.
(Conjugation by $m$ acting on $\calk$ induces the map on $H_1(X_p(K))$ given by the deck transformation.)

The preimage, $\widetilde{m}$, of the meridian of  $K$ in $X_p(K)$ can be viewed as a meridian to the branched curve in $\barX_p(K)$.
It represents an invariant class in $H_1(X_p(K))$ under the action of $T_*$.

Since $\chi(m^p) = t^p = 1 \in \calm(d,p,a)$, the lifted meridian $\widetilde{m}$ satisfies $\tau'(\widetilde{m}) = 0 \in \Z_d$.   Hence, the  map $\tau'$ must vanish on the preimage of the meridian and so  can be viewed as defined on the first homology of the branched cover, as desired.

In the reverse direction, let $\tau \co H_1(\barX_p(K)) \to \Z_d$ satisfy $\tau(T_*x) = a\tau(x)$.
Any element $\gamma \in \pi_K$ can be written uniquely as a product $m^k \alpha$, where  $k$ is the image of $\gamma$ in $H_1(X(K), \Z) \cong \Z$  and $\alpha \in \calk$. 
A metacyclic  representation $\chi$ can be defined by $\chi(m^k \alpha) = t^k r^{\tau(\overline{\alpha})}$, where $\overline{\alpha}$ is the image of $\alpha$ in $H_1(\barX_p(K))$.
This sets up the bijection stated in the theorem.
\end{proof}


\section{Twisted homology associated to metacyclic representations}\label{sec-twisted}
The first time that twisted homology was used to refine dihedral knot invariants was in Casson and Gordon's initial work on knot concordance~\cite{MR900252}. There, what could have been offered as a single invariant was split into a set of finer invariants, yielding much stronger results. The papers~\cite{MR2652542,MR1670420} present the extension to metacyclic groups and, in particular, discuss the matrix representations that appeared above.

Here we begin to summarize the needed results concerning metacyclic knot group representations, expanding on the brief summary of the previous section. More details can be found in~\cite[Chapter 14A]{MR3156509}. An extended discussion of the use of local coefficients in knot theory is contained in~\cite{MR143201}. Of special note is the paper~\cite{MR683753} in which Hartley used metacyclic covers to identify all non-reversible knots of 10 crossings; his work was phrased in terms of the homology of covering spaces rather than via local coefficients.

Let $X$ be a finite connected CW complex and let $\chi \co \pi_1(X)\, \to\, \calm(d,p,a)$ be a homomorphism. Via the canonical $\GL(p, \F)$ representation, this gives the vector space $\F^p$ the structure of a right $\Z[\pi_1(X)]$-module, where we view $\GL(p, \F)$ as acting on $\F^p$ on the right. When viewed as a module in this way, we write it as $\{\F^p\}$. The definition of the twisted homology group, written as $H_i( X, \{\F^p\})$ or $H_i(X, \chi)$, will be reviewed below.

\subsection{Review of covering space theory} Before stating the main result of this section, we need to summarize basic facts about covering spaces and set up notation.

Let $X$ be a connected CW complex with a fixed base point.  Basic covering space theory states that for every normal subgroup $\pi' \subset \pi_1(X)$ there is a connected covering space $X_{\pi'}$ with fundamental group $\pi'$ and group of deck transformations isomorphic to $\pi / \pi'$.   Our convention is that deck transformations act on the left. 

This is generalized as follows.  A group homomorphism $\phi \co \pi_1(X) \to G$ induces a map of $X$ to the Eilenberg-MacLane space $K_G$.  If we denote the universal cover of $K_G$ by $E_G$, then the cover $E_G \to K_G$ pulls back  to  give a  covering space of $X$ which we denote $X_\phi$.  The group of deck transformations is isomorphic to $G$.

An alternative view of the construction of $X_\phi$ that provides some intuition goes as follows.  By attaching 1-cells to $X$, we can embed   $X \subset X'$ where  $X'$ is a space supporting  a surjection $\phi' \co \pi_1(X') \to G$ extending $\phi$.  Then $X_\phi$ is the preimage of $X$ in the connected covering space  $X'_{\ker(\phi')}$ which has group of deck transformations isomorphic to $G$.

If $\phi$ is surjective, $X_\phi = X_\calk$, where $\calk$ is the kernel of $\phi$.  More generally, the components of $X_\phi$ correspond to cosets of the image of $\phi$ and the connected components of $X_\phi$ are homeomorphic to $X_\calk$.

\subsection{Twisted homology and eigenspace decompositions}
We let ${X}_\chi$ denote the $pd$-fold covering space of $X$ corresponding to the   representation $\chi  \co \pi_1(X)\to\calm(d,p,a)$. Its group of deck transformations is isomorphic to $\calm(d,p, a)$.  The main result of this section is the following.

\begin{theorem} \label{thm-local} The groups $H_i( X, \chi)$ and $ H_i( {X}_\chi, \F)_{\zeta_d}$ are isomorphic, where $H_i( {X}_\chi, \F)_{\zeta_d}$ is the $\zeta_d$-eigenspace of the action of the order $d$ deck transformation corresponding to $r \in \calm(d, p,a)$ acting on $H_i( X_\chi, \F)$.
\end{theorem}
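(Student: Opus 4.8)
The plan is to relate both sides to the homology of the same covering space $X_\Psi$ and track how the relevant group actions decompose the coefficients. The key observation is that the twisted chain complex $C_*(X, \{\F^p\})$ can be identified with a summand, or an eigenspace, inside the ordinary chain complex of $X_\Psi$ with $\F$-coefficients, on which the deck group $\calm$ acts. More precisely, $C_*(X_\Psi, \F) \cong \F[\calm] \otimes_{\F} C_*(\widetilde X, \F)$ as a module over $\F[\pi_1(X)]$ (lifting cells), and pushing down to $X$ gives $C_*(X, \{\F^p\}) = C_*(\widetilde X, \F) \otimes_{\Z[\pi_1(X)]} \F^p$ where $\F^p$ carries the $\Psi$-twisted action. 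So the first step is to recall the definition of twisted homology via $C_*(\widetilde X)$ and to set up these standard identifications carefully, being explicit about left versus right module conventions (the paper has already flagged that $\GL(p,\F)$ acts on the right and deck transformations on the left).

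**Next I would** decompose $\F^p$ as an $\F[\calm]$-module, or rather identify where it sits inside $\F[\calm]$. The canonical representation $\Psi$ is irreducible of dimension $p$; the key point is that, after extending scalars so that $\F$ contains $\zeta_d$, the representation $\F^p$ appears in the regular representation $\F[\calm]$, and the cyclic subgroup $\langle r\rangle \cong \Z_d$ acts on $\F^p$ with eigenvalues $\zeta_d, \zeta_d^a, \dots, \zeta_d^{a^{p-1}}$ — visible directly from the diagonal matrix for $r$ in the statement of $\Psi$. Concretely, pick the eigenvector $e_1 \in \F^p$ with $r \cdot e_1 = \zeta_d e_1$. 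I claim the map $\F^p \to \F[\calm]$, $v \mapsto \sum_{g \in \calm} (v \cdot g^{-1} \text{-component along } e_1)\, g$, or more cleanly the projection $\F[\calm] \to \F^p$ given by $g \mapsto e_1 \cdot \Psi(g)$, identifies $\F^p$ with the image of the idempotent in $\F[\Z_d] \subset \F[\calm]$ projecting onto the $\zeta_d$-eigenspace of $r$. Tensoring $C_*(\widetilde X)$ over $\Z[\pi_1 X]$ with this identification turns $C_*(X, \chi)$ into the $\zeta_d$-eigenspace of the $r$-action on $C_*(X_\Psi, \F) = C_*(\widetilde X) \otimes_{\Z[\pi_1 X]} \F[\calm]$.

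**Then** because the $r$-action (being a deck transformation) commutes with the boundary maps, taking the $\zeta_d$-eigenspace is an exact functor on $\F[\Z_d]$-modules — here I use the hypothesis that $\F$ contains $d$ distinct $d$-th roots of unity, so $\F[\Z_d]$ is semisimple and splitting off an eigenspace commutes with homology. This gives
\[
H_i(X, \chi) \;\cong\; H_i\bigl( C_*(X_\Psi, \F)_{\zeta_d} \bigr) \;\cong\; H_i(X_\Psi, \F)_{\zeta_d},
\]
which is exactly the assertion. I would close by noting naturality: all identifications are natural in $X$, so this is an isomorphism of functors, which is what later applications (e.g. the Mayer–Vietoris argument) will need.

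**The main obstacle** I anticipate is purely bookkeeping rather than conceptual: getting the left/right module conventions to line up so that "tensor with $\F^p$ on the right over $\Z[\pi_1 X]$" and "project to the $\zeta_d$-eigenspace of the left $r$-action on $C_*(X_\Psi)$" really are the same operation, with no stray inverse or contragredient creeping in. A secondary subtlety is making precise the claim that $\F^p$, as a module over the subalgebra $\F[\Z_d] = \F[\langle r\rangle]$, is the single eigenline for $\zeta_d$ rather than for one of the conjugate eigenvalues $\zeta_d^{a^k}$ — this requires pinning down which primitive root of unity is named $\zeta_d$ and which cell-lift of the basepoint is used to trivialize things, and then checking the other eigenvalues $\zeta_d^{a^k}$ correspond to the other lifts (equivalently, to precomposing $\chi$ with an automorphism of $\calm$), so that one genuinely recovers a single eigenspace and not a sum. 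Once the conventions are fixed, the exactness of the eigenspace functor and the standard chain-level identification do all the work.
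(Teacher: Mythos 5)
Your proposal is correct and follows essentially the same route as the paper's proof: identify the coefficient module $\F^p$ with the $\zeta_d$-eigenspace of left multiplication by $r$ on $\F[\calm]$ (the paper does this with an explicit basis, you with the idempotent in $\F[\Z_d]$), so that the twisted chain complex becomes the $\zeta_d$-eigenspace of $C_*(X_\Psi,\F)$, and then pass eigenspaces through homology (the paper by an averaging argument using $1/d$, you by semisimplicity of $\F[\Z_d]$ — the same use of the hypothesis on $\F$). The left/right bookkeeping you flag is exactly the content of the paper's Step 1 and first lemma, and your plan handles it correctly.
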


\subsection{Review of twisted homology groups} We abbreviate $\pi_1(X)$ by $\pi$ and let $\widetilde{X}$ denote the universal cover of $X$. Let $M$ be any right $\F[\pi]$-module. The twisted homology $H_*(X, M)$ is defined to be the homology of the chain complex of $\F$-vector spaces
\[ \cdots  \xrightarrow{\text{id} \otimes \partial_{i+1}}  M \otimes_{\F[\pi]} C_{i}(\widetilde{X}, \F) 
\xrightarrow{\text{id} \otimes \partial_i }  M \otimes_{\F[\pi]} C_{i-1}(\widetilde{X}, \F)
 \xrightarrow{\text{id} \otimes \partial_{i-1} } \cdots . \]

In the case that $M$ has the structure of an $\F[G]$-module for a group $G$ and the $\F[\pi]$-module arises via a homomorphism $\chi \co \pi\, \to\, G$, the homology of this complex is the same as that of the following complex:
\[ \cdots \xrightarrow{\text{id} \otimes \partial_{i+1}}
 M \otimes_{\F[G]} C_{i}({X}_\chi, \F) 
 \xrightarrow{\text{id} \otimes \partial_i }
  M \otimes_{\F[G]} C_{i-1}({X}_\chi, \F)  
  \xrightarrow{\text{id} \otimes \partial_{i-1}} \cdots , \]
where $X_\chi$ denotes the regular $G$-cover associated to $\chi$ as above. (As described above, if $\chi$ is surjective, then $X_\chi$ is connected with fundamental group isomorphic to $\ker \chi$. Otherwise, the number of components of $X_\chi$ is the index of the image of $\chi$. In any case, the group of deck transformations is isomorphic to $G$.)

In all that follows, we have a representation $\chi \co \pi_1(X) \to \calm(d,p,a)$.  Composing  $\chi$ with the canonical representation of $\calm(d,p,a) $ to $ \GL(p, \F)$ gives $\F^p$ the structure of a $\pi_1(X)$-module.  The twisted homology in this  case  is denoted either by $H_*(X, \chi)$ or $H_*(X, \{\F^p\})$, with the $\chi$ being implicit in the second notation.

\begin{definition}  The twisted Betti numbers are defined by $\beta_i(X , \chi) = \dim_{\F}H_i(X, \chi)$.
\end{definition}

\subsection{%
    \texorpdfstring
     {Splitting the homology when \(M = \fm\)} %
     {Splitting the homology when M = F[M]}%
 }
 To simplify notation, we will denote the $\pi_1(X)$-module $\F[\calm(d,p,a)]$ by $\F[\calm]$.  We can view $\fm$ as a right $\fm$-module via right multiplication. In the case of interest we have $\chi \co \pi \, \to\, \calm(d,p,a)$, which gives $M = \F^p$ the structure of a right $\F[\pi]$-module. 

The twisted homology $H_*(X, \fm)$ is that of the sequence
\[ \cdots \xrightarrow{\text{id} \otimes \partial_{i+1} } \fm \otimes_{\fm} C_{i}(X_\chi, \F)
\xrightarrow{  \text{id} \otimes \partial_i } \fm\otimes_{\fm} C_{i-1}(X_\chi, \F)
\xrightarrow{ \text{id} \otimes \partial_{i-1} }\cdots . \]
Observe that this is isomorphic as an $\F$-chain complex to that of $X_{\chi}$.
The element $r \in \calm(d,p,a)$ generates a normal subgroup isomorphic to $\Z_d$. Left multiplication by $r$ defines a linear transformation $R$ of $\fm$ and $\fm$ splits as the direct sum of eigenspaces of $R$. We denote the $\zeta^k$-eigenspace by $E_{\zeta^k}$. A basis for $E_{\zeta^k}$ is given by the set
\[
\left\{ \left( \sum_{j = 0}^{d-1} \zeta^{-jk} r^j \right) t^l \right\}_{l = 0,\ldots, p-1} .
\]
A proof that this is a basis will be given in Section~\ref{sec-basis}. Notice that since the left action of $R$ and the right action of $\fm$ on $\fm$ commute, the splitting into eigenspaces also splits $\fm$ as a direct sum of right $\fm$-modules. 

Given this splitting, we have that the twisted homology $H_*(X, E_\zeta)$ is given as the homology of the sequence
\[ \cdots \xrightarrow{\text{id} \otimes \partial_{i+1} }
 E_\zeta \otimes_{\fm} C_{i}(X_\chi, \F) 
 \xrightarrow{\text{id} \otimes \partial_i }
  E_\zeta\otimes_{\fm} C_{i-1}(X_\chi, \F)
  \xrightarrow{ \text{id} \otimes \partial_{i-1}} \cdots . \]

\subsection{Completion of the proof of Theorem~\ref{thm-local}}\label{sec-basis} To complete the proof of Theorem~\ref{thm-local} there are two steps: (1) identifying $E_\zeta$ with the module defined by the matrix representation presented at the beginning of this section and (2) identifying the homology of the chain complex above as giving the $\zeta$-eigenspace of $H_*(X_\chi, \F)$ under the action of $R$.

\smallskip

\noindent{\textbf{Step 1.}} To simplify notation, we consider the case of $p = 3$ and $d = 7$, so that $E_\zeta$ has basis consisting of the vectors:

\[
v_0 = 1 + \zeta^{-1} r + \zeta^{-2} r^{2} + \zeta^{-3} r^{3} + \zeta^{-4} r^4  + \zeta^{-5} r^{5} + \zeta^{-6} r^6.
\]

\[
v_1 = (1 + \zeta^{-1} r+ \zeta^{-2} r^{2} + \zeta^{-3} r^{3} + \zeta^{-4} r^4+ \zeta^{-5} r^{5} + \zeta^{-6} r^6)\, t.
\]

\[
v_2 = (1 + \zeta^{-1} r+ \zeta^{-2} r^{2} + \zeta^{-3} r^{3} + \zeta^{-4} r^4 + \zeta^{-5} r^{5} + \zeta^{-6} r^6)\, t^2.
\]

\smallskip

\noindent We now observe that these do form a basis. It is evident that multiplication on the left by $r$ multiplies each $v_i$ by $\zeta$. In particular, these vectors are contained in the eigenspace $E_\zeta$. They are clearly linearly independent, given the different exponents of $t$ in each element. A dimension-counting argument, taking into account the similarly defined bases for the other $E_{\zeta^k}$, proves that they span.

We now consider the right actions of $t$ and $r$ on the $v_k$. It is clear that right multiplication by $t$ cyclically permutes the $v_k$. We have the relation $t^k r = r^{a^k} t^k$. Thus, right multiplication of $v_k$ by $r$ yields the same element as left multiplication by $r^{a^k}$. Since we are in the $\zeta$-eigenspace for $r$, this has the same effect as multiplication by $\zeta^{a^k}$. This explains the matrix representation given earlier. (Recall, this is a right action, so the matrices are acting on the right.)

\smallskip

\noindent{\textbf{Step 2.}} This step is completed by two lemmas.

\begin{lemma} $E_\zeta \otimes_{\fm} C_{i}(X_\chi, \F) \cong C_{i}(X_\chi, \F)_\zeta$, the $\zeta$-eigenspace for $r$ acting on the left on $C_{i}(X_\chi, \F)$.
\end{lemma}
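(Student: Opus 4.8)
The plan is to identify $E_\zeta \otimes_{\fm} C_i(X_\calm,\F)$ with the $\zeta$-eigenspace of $r$ acting on $C_i(X_\calm,\F)$ by exhibiting an explicit $\F$-linear isomorphism and checking it is well-defined on the tensor product. First I would recall that $C_i(X_\calm,\F)$ is a free right $\fm$-module, with an $\F$-basis given by the cells of $X_\calm$; choosing one cell $\sigma_\lambda$ over each cell of $X$ gives a free basis $\{\sigma_\lambda\}$ over $\fm$, so $C_i(X_\calm,\F) \cong \bigoplus_\lambda \sigma_\lambda \cdot \fm$ as right $\fm$-modules. Tensoring on the left with $E_\zeta$ over $\fm$ then gives $E_\zeta \otimes_{\fm} C_i(X_\calm,\F) \cong \bigoplus_\lambda E_\zeta \otimes_{\fm} (\sigma_\lambda\cdot\fm) \cong \bigoplus_\lambda E_\zeta$, since $E_\zeta$ is a right $\fm$-module and tensoring a right module with a rank-one free left module returns the module itself.

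Next I would make the identification with the eigenspace concrete. The left action of $r$ on $\fm$ commutes with the right $\fm$-module structure (as noted just before the lemma), so $E_\zeta$ is a sub-right-$\fm$-module, and there is a natural map $E_\zeta \otimes_{\fm} C_i(X_\calm,\F) \to C_i(X_\calm,\F)$ sending $v \otimes c \mapsto v\cdot c$, using that $C_i(X_\calm,\F)$ is itself a left $\fm$-module via the deck action (deck transformations act on $X_\calm$ and hence on its cellular chains), and this left action is by $\fm$-linear maps for the right structure. I would check that the image lands in $C_i(X_\calm,\F)_\zeta$: since every element of $E_\zeta$ satisfies $r\cdot v = \zeta v$, we get $r\cdot(v\cdot c) = (r\cdot v)\cdot c = \zeta (v\cdot c)$. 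The map is well-defined on the tensor product because $v\mu \otimes c \mapsto (v\mu)\cdot c = v\cdot(\mu c)$ for $\mu \in \fm$, matching $v \otimes \mu c$. To see it is an isomorphism, I would use the free basis: on the summand $E_\zeta \otimes_{\fm}(\sigma_\lambda\cdot\fm)$, the map sends $v\otimes(\sigma_\lambda\cdot\mu) \mapsto v\cdot(\sigma_\lambda\cdot\mu)$; writing $v = (\sum_j \zeta^{-jk}r^j)t^l$ with $k$ fixed so $\zeta^k = \zeta$, these images over all $\lambda$ and all basis vectors $v_l$ of $E_\zeta$ are visibly linearly independent and, by the eigenspace decomposition $\fm = \bigoplus_k E_{\zeta^k}$ carried over to each free summand, span exactly $C_i(X_\calm,\F)_\zeta$.

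The main obstacle I anticipate is bookkeeping around the two commuting actions: $C_i(X_\calm,\F)$ carries simultaneously the right $\fm$-structure (used to form the tensor product) and the left $\fm$-structure coming from deck transformations, and one must verify these genuinely commute and that the left action restricted to the image of $r$ is what is meant by ``$r$ acting on the left'' in the statement. This is ultimately a statement that the deck group acts on cellular chains of $X_\calm$ by maps commuting with the covering-space right module structure — standard, but it needs to be pinned down cleanly. A secondary subtlety is that $X_\calm$ may be disconnected when $\Psi$ is not surjective; but since $C_i(X_\calm,\F)$ is still a free right $\fm$-module of the appropriate rank (the rank being the number of $i$-cells of $X$, with $\fm$ acting transitively on fibers within each $\calm$-orbit of cells), the argument above goes through verbatim. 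With these points settled, the isomorphism $E_\zeta \otimes_{\fm} C_i(X_\calm,\F) \cong C_i(X_\calm,\F)_\zeta$ follows, and it is visibly compatible with the boundary maps, which is what the second lemma of Step 2 will need.
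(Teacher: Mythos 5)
Your proof is correct, but it goes in the opposite direction from the paper's argument. You construct the evaluation map $E_\zeta \otimes_{\fm} C_i(X_\calm,\F) \to C_i(X_\calm,\F)$, $v \otimes c \mapsto v\cdot c$, note that $E_\zeta$ is a right ideal of $\fm$ (so $r\cdot(v\cdot c)=\zeta\, v\cdot c$ and the image lies in the $\zeta$-eigenspace of the left $r$-action), and obtain bijectivity onto that eigenspace from the canonical isomorphism $E_\zeta \otimes_{\fm} \fm \cong E_\zeta$ applied summand by summand to a free $\fm$-basis of $C_i(X_\calm,\F)$ given by one lift of each $i$-cell of $X$. The paper instead maps the other way: it defines $\phi(x) = v_0 \otimes x$ on $\oplus_k C_i(X_\calm,\F)_{\zeta^k}$, shows the $\zeta^k$-eigenspaces with $k \ne 1$ are annihilated (from $(1-\zeta^{1-k})\,v_0\otimes x = 0$), gets surjectivity because right multiplication by $t$ cyclically permutes the $v_j$, and finishes with a dimension count ($pn$ on both sides). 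Your route is more directly module-theoretic: no dimension count is needed, injectivity is the standard isomorphism $M\otimes_R R\cong M$, and compatibility with the boundary maps (which you will want for the next lemma) is immediate; the paper's version trades that for not having to identify the image of an evaluation map. One bookkeeping slip to correct: you begin by calling $C_i(X_\calm,\F)$ a free \emph{right} $\fm$-module, but the tensor product $E_\zeta \otimes_{\fm} C_i(X_\calm,\F)$ as written (and as the paper sets it up) requires $C_i(X_\calm,\F)$ to carry its \emph{left} $\fm$-structure coming from the deck action, with $E_\zeta$ the right module. There is only this one $\fm$-structure on the chains in play, so the "two commuting actions" you worry about in your final paragraph do not arise: well-definedness of $v\otimes c \mapsto v\cdot c$ is simply associativity, $(v\mu)\cdot c = v\cdot(\mu c)$, and your identification of the image with $\bigoplus_\lambda E_\zeta\,\sigma_\lambda = C_i(X_\calm,\F)_\zeta$ then stands as written, including in the disconnected case.
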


\begin{proof} Recall that $C_i(X_\chi, \F)$ is a free left ${\fm}$-module with generators corresponding to the $i$-cells of $X$. Assume there are $n$ of them. Then
\[ E_\zeta \otimes_{\fm} C_{i}(X_\chi, \F) \cong E_\zeta \otimes_{\fm} {\fm}^n \cong E_\zeta^n. \]
For later use, we note this implies that the dimension of $E_\zeta \otimes_{\fm} C_{i}(X_\chi, \F)$ equals $p n$.

The left module $C_{i}(X_\chi, \F)$ splits into eigenspaces under the left action of $r$, which we denote $C_{i}(X_\chi, \F)_{\zeta^k}$. There is a vector-space map
\[
\phi \co \bigoplus\nolimits_k C_{i}(X_\chi, \F)_{\zeta^k} \to E_\zeta \otimes_{\fm} C_{i}(X_\chi, \F)
\]
given by $\phi(x) = v_0 \otimes x$. Since the right action of $t$ cyclically permutes the set $\{v_j\}$, this is a vector-space surjection.

We claim that for $x \in C_i(X_\chi,\F)_{\zeta^k}$ one has $\phi(x) = 0$ if $k\ne 1$. To see this, we have the sequence of equalities:
\[
v_0 \otimes x = v_0 \otimes \zeta^{-k} r x = \zeta^{-k} v_0 r \otimes x = \zeta^{-k} \zeta \, v_0 \otimes x.
\]
The last equality follows from the direct calculation that $v_0r = \zeta v_0$.

\[
(1 - \zeta^{1 - k})\, v_0 \otimes x = 0.
\]
If $k\ne 1$, then this implies that $v_0 \otimes x = 0$; that is, $\phi(x) = 0$ as desired.
As a consequence, we see that the map $C_i(X_\chi, \F)_\zeta \to E_\zeta \otimes_{\fm} C_i(X_\chi, \F)$ is surjective. The dimensions are the same, so they are isomorphic. Note that $1-\zeta^{1-k}\neq 0$ for $k\neq 1$ since $\zeta$ is primitive.
\end{proof}

\begin{lemma} The homology $H_*(X_\chi, \F)_\zeta$ is isomorphic to the homology of the complex $C_*(X_\chi, \F)_\zeta$.
\end{lemma}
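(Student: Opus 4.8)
The plan is to obtain this purely formally, as the statement that on chain complexes of $\F$-vector spaces the operation of passing to the $\zeta$-eigenspace of the order-$d$ operator induced by $r$ commutes with taking homology. Combined with the previous lemma — which already identifies the complex $E_\zeta \otimes_{\fm} C_*(X_\calm, \F)$ with the eigenspace subcomplex $C_*(X_\calm,\F)_\zeta$ — this finishes Step~2 and hence Theorem~\ref{thm-local}.

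First I would record the ingredients. The element $r \in \calm$ acts on $X_\calm$ (the covering corresponding to $\chi$, written $X_\chi$ in the statement) as a deck transformation, hence acts $\F$-linearly on the cellular chain complex $C_*(X_\calm,\F)$ and, since $r^d = 1$, generates an action of $\Z_d$ by chain automorphisms. Because $\F$ contains $d$ distinct $d$-th roots of unity, $d$ is invertible in $\F$ (Section~\ref{sec-eigen}), so for each $k$ the element
\[ e_{\zeta^k} \ =\ \frac{1}{d}\sum_{j=0}^{d-1}\zeta^{-jk}\, r^j \]
makes sense and is an idempotent; the $e_{\zeta^k}$ are mutually orthogonal, sum to $1$, and $e_{\zeta^k}$ projects any $\F[\Z_d]$-module onto its $\zeta^k$-eigenspace for $r$. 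Each $e_{\zeta^k}$ is an $\F$-linear combination of the chain maps $r^j$, hence is itself a chain endomorphism commuting with $\partial$, and therefore
\[ C_*(X_\calm,\F) \ =\ \bigoplus_{k=0}^{d-1} C_*(X_\calm,\F)_{\zeta^k} \]
is a direct sum decomposition of chain complexes. Homology commutes with finite direct sums, so $H_*(X_\calm,\F) = \bigoplus_k H_*\!\big(C_*(X_\calm,\F)_{\zeta^k}\big)$, and on the $k$-th summand the induced operator $R$ acts as multiplication by $\zeta^k$ (it does so already at the chain level); since $\zeta$ is a primitive $d$-th root of unity the $\zeta^k$ are distinct, so this summand is exactly the $\zeta^k$-eigenspace of $R$ on $H_*(X_\calm,\F)$. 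Taking $k=1$ gives $H_*\!\big(C_*(X_\calm,\F)_\zeta\big) \cong H_*(X_\calm,\F)_\zeta$, which is the lemma; feeding in the previous lemma (together with Step~1) then yields $H_*(X,\chi) \cong H_*(X_\calm,\F)_\zeta$, completing the proof of Theorem~\ref{thm-local}.

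I do not expect a genuine obstacle here: this is the standard fact that over a field in which the group order is invertible, isotypic projection is exact, so the only point that needs to be stated explicitly is the invertibility of $d$ in $\F$, which is built into the running hypotheses. If one prefers to avoid idempotents, the same conclusion is immediate from the semisimplicity of $\F[\Z_d] \cong \F^{\times d}$: every short exact sequence of $\F[\Z_d]$-modules splits, so the eigenspace functors are exact and commute with homology. A minor bookkeeping remark worth including is that $X_\calm$ need not be connected when $\chi$ is not surjective, but the argument above is insensitive to this, since it uses only the $\Z_d$-action on the chain complex.
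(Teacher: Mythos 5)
Your proof is correct, and it reaches the conclusion by a somewhat different organization than the paper's. The paper argues directly about the map induced by the inclusion $C_*(X_\chi,\F)_\zeta \subset C_*(X_\chi,\F)$: surjectivity onto $H_i(X_\chi,\F)_\zeta$ is obtained by averaging a representative cycle against the deck transformation, $z \mapsto \sum_{j}\frac{1}{d}\zeta^{-j}R_*^{j}z$, and injectivity by applying the same averaging to a bounding chain $w$. You package that same averaging operator into the orthogonal idempotents $e_{\zeta^k}=\frac{1}{d}\sum_{j}\zeta^{-jk}r^{j}$, decompose the entire chain complex as a direct sum of eigen-subcomplexes, and then invoke the fact that homology commutes with finite direct sums (equivalently, semisimplicity of $\F[\Z_d]$), reading off the $k=1$ summand. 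The essential ingredient is identical in both proofs, namely invertibility of $d$ in $\F$ (which the paper recorded when introducing the fields), so there is no gap; the difference is that your formulation handles all eigenvalues simultaneously and makes the exactness of isotypic projection explicit rather than re-deriving it by hand on cycles and boundaries, while the paper's version avoids naming the idempotents and checks surjectivity and injectivity directly. Your closing remark that the argument is insensitive to $X_\calm$ being disconnected is a point the paper leaves implicit, and is worth keeping.
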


\begin{proof} It is clear that the inclusion $C_i(X_\chi, \F)_\zeta \to C_i(X_\chi, \F)$ induces a map on homology and the image is contained in $H_i(X_\chi, \F)_\zeta$. If a class $c \in H_i(X_\chi, \F)_\zeta$ is represented by a cycle $z$, so $c = [z]$, then $c = \zeta^{-1} R_*(c) = [\zeta^{-1}R_*z]$, where $R_*$ is induced by the deck transformation. From this we see that
\[
c = \frac{1}{d} \sum_{j = 0}^{d-1} [\zeta^{-j}R_*^j z] = \left[ \sum_{j=0}^{d-1} \frac{1}{d}\zeta^{-j}R_*^j z \right].
\]
The cycle in the brackets in the rightmost summation is in $C_i(X_\chi, \F)_{\zeta}$. It follows that $H_i(C_i(X_\chi, \F)_\zeta) \to H_i(X_\chi, \F)_\zeta$ is surjective.

The proof of injectivity is similar. If an element $[z]$ in $H_i(C_i(X_\chi, \F)_\zeta)$ represents $0$ in $H_i(X_\chi, \F)$, there is a $w \in C_{i+1}(X_\chi, \F)$ with $\partial w = z$. Now $w$ and $z$ can be averaged to show that $[z] \in H_i(C_*(X_\chi, \F)_\zeta)$ is represented by a cycle $z'$ that bounds a chain $w'$ in $C_{i+1}(X_\chi, \F)_{\zeta}$.

\end{proof}

\subsection{A comment about subgroups} For positive integers $d_1$ and $d_2$, let $d = d_1 d_2$. Temporarily we write
\[
\calm_1 = \left\langle r, t \ \b  ig|\ r^{d_1} = 1,\ t^p = 1,\ trt^{-1} = r^a \right\rangle
\qquad\text{and}\qquad
\calm = \left\langle R, T \ \big|\ R^{d} = 1,\ T^p = 1,\ TRT^{-1} = R^a \right\rangle .
\]
Notice that if the triple $(d, p, a)$ satisfies the required criteria to define a metacyclic group, then so does $(d_1, p, a)$.

The map $\phi \co \calm_1 \to \calm$ defined by $\phi(t) = T$ and $\phi(r) = R^{d_2}$ defines an injective homomorphism.

The definition of the twisted metacyclic invariants we are considering requires a choice of primitive root of unity in the field $\F$. If $\zeta_d$ is a primitive $d$-root of unity, then $\zeta_{d_1} = \zeta_{d}^{d_2}$ is a primitive $d_1$-root of unity.

\begin{lemma} With the notation established immediately above, a character $\chi \co \pi \to \calm(d_1, p, a)$ determines a character $\chi' \co \pi \to \calm(d, p, a)$. For these, we have $H_*(X, \chi) \cong H_*(X, \chi')$.
\end{lemma}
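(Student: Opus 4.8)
The plan is to set $\chi' = \phi \circ \chi$ and then observe that, with the compatible choices of roots of unity fixed in the discussion above, the two twisted chain complexes computing $H_*(X,\chi)$ and $H_*(X,\chi')$ are not merely quasi-isomorphic but literally the same complex, because the underlying right $\F[\pi]$-module $\{\F^p\}$ is identical in both cases. Concretely, $H_*(X,\chi)$ is the homology of $\F^p \otimes_{\F[\pi]} C_*(\widetilde{X},\F)$, where $\F^p$ is made a right $\F[\pi]$-module by composing $\chi$ with the canonical representation $\Psi_1 \co \calm(d_1,p,a) \to \GL(p,\F)$ (built from the primitive root $\zeta_{d_1}$), and $H_*(X,\chi')$ is the homology of the same tensor product with the module structure coming from $\Psi \co \calm(d,p,a) \to \GL(p,\F)$ (built from $\zeta_d$) composed with $\chi' = \phi\circ\chi$. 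Note first that $\chi'$ is a well-defined homomorphism, that $(d_1,p,a)$ satisfies the metacyclic conditions so $\Psi_1$ exists, and that a field containing $d$ distinct $d$-th roots of unity contains $d_1$ distinct $d_1$-th roots, with $\zeta_{d_1} = \zeta_d^{d_2}$ a valid choice — all recorded in the text preceding the lemma.

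The one substantive step is to check that $\Psi_1 = \Psi \circ \phi$ as homomorphisms $\calm(d_1,p,a) \to \GL(p,\F)$. Since both sides are homomorphisms, it suffices to compare them on the generators $t$ and $r$. On $t$ we have $\Psi(\phi(t)) = \Psi(T)$, the $p\times p$ cyclic permutation matrix, which does not depend on $d$ and hence equals $\Psi_1(t)$. On $r$ we have $\Psi(\phi(r)) = \Psi(R^{d_2}) = \Psi(R)^{d_2}$, the diagonal matrix whose $j$-th entry is $\zeta_d^{a^j d_2} = (\zeta_d^{d_2})^{a^j} = \zeta_{d_1}^{a^j}$ for $j = 0,\dots,p-1$; this is exactly $\Psi_1(r)$. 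So the two homomorphisms agree on a generating set and are therefore equal.

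With $\Psi_1 = \Psi \circ \phi$ in hand, the right $\F[\pi]$-module structure on $\F^p$ determined by $\Psi_1 \circ \chi$ coincides with the one determined by $\Psi \circ \chi'$. The chain complexes $\F^p \otimes_{\F[\pi]} C_*(\widetilde{X},\F)$ defining $H_*(X,\chi)$ and $H_*(X,\chi')$ are then identical, so $H_*(X,\chi) = H_*(X,\chi')$ — a touch stronger than the asserted isomorphism. As a byproduct, if $\chi$ is a metacyclic representation then so is $\chi'$, since $\phi$ carries $tr^\alpha$ to $TR^{d_2\alpha}$; this is not needed for the homology statement but is convenient bookkeeping for applications.

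I do not expect a real obstacle: the entire content is the root-of-unity bookkeeping in the middle step, i.e.\ seeing that the relation $\zeta_{d_1} = \zeta_d^{d_2}$ is precisely what forces $\Psi(R)^{d_2}$ onto $\Psi_1(r)$. One could instead argue through the eigenspace picture of Theorem~\ref{thm-local}: since $\phi$ is injective, $\ker\chi' = \ker\chi$, so the relevant covering space is common to both; the deck transformation labelled $r$ in the $\calm(d_1,p,a)$-picture is the one labelled $R^{d_2}$ in the $\calm(d,p,a)$-picture; and the $\zeta_d$-eigenspace of the latter equals the $\zeta_d^{d_2} = \zeta_{d_1}$-eigenspace of the former. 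That route works but forces one to worry about connectedness and component counts of the covers when $\chi$ is not surjective, a nuisance the module argument avoids, so I would present the module argument as the proof.
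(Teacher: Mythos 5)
Your proposal is correct and follows the same route as the paper: both arguments come down to the matrix form of the canonical representation, checking that the diagonal entries assigned to $r$ under $\Psi_1$ (built from $\zeta_{d_1}=\zeta_d^{d_2}$) coincide with those of $\Psi(R^{d_2})$, so the two $\F[\pi]$-module structures on $\F^p$ agree. Your write-up is simply a more explicit version (also checking $t$ and noting the chain complexes are literally equal) of the paper's two-line verification.
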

\begin{proof} This is most evident from the matrix form of the representation. Under the map $\chi$, the diagonal entries of the image of the representation of the element $r \in \calm(d_1, p, a)$ are of the form $(\zeta_d^{d_2})^{a^k}$, for $k = 0, \ldots , p-1$. Note that $\zeta_{d_1}=\zeta_d^{d_2}$.  The action of $t$ is by the same permutation matrix.

If the inclusion map $\phi$ is applied to define $\chi'$, then from $\phi(r) = R^{d_2}$ we see that for $\chi'$, the diagonal entries are of the form $(\zeta_d^{a^k})^{d_2}=(\zeta_{d_1})^{a^k},  k=0,\ldots,p-1$.  The representation matrices for $\chi$ and $\chi'$ are identical, so the twisted homology groups are the same.
\end{proof}

\begin{corollary}\label{cor-cyclic} If $\chi \co \pi_1(X) \to \calm(d,p,a)$ has cyclic image generated by $t$, then $H_*(X, \chi) \cong H_*(X_p, \F)$, where $X_p$ is the $p$-fold cyclic cover of $X$.
\end{corollary}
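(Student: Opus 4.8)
The plan is to deduce this from the preceding Lemma by recognizing a cyclic image inside a metacyclic group as a metacyclic group with $d_1 = 1$. First I would observe that if $\chi \co \pi_1(X) \to \calm(d,p,a)$ has cyclic image, then—after replacing $\chi$ by a conjugate, which does not change the twisted homology—the image lands in the cyclic subgroup generated by $t$. Indeed, a metacyclic representation sends meridians to elements $tr^i$, and the subgroup generated by such an element, when the image is cyclic, is conjugate into $\langle t \rangle \cong \Z_p$; alternatively, one argues directly that any cyclic subgroup of $\calm(d,p,a)$ whose order is coprime to nothing in particular still must, after conjugation, lie in the Sylow-$p$ part $\langle t\rangle$ since the only elements of order dividing $p$ not in $\langle r\rangle$ are conjugates of powers of $t$. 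Thus $\chi$ factors through $\Z_p = \calm(1,p,1)$.

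Next I would apply the Lemma immediately above with $d_1 = 1$ and $d_2 = d$, so that $\calm_1 = \calm(1,p,a) = \Z_p$ and $\calm = \calm(d,p,a)$: a character $\chi \co \pi \to \calm(1,p,a)$ determines $\chi' \co \pi \to \calm(d,p,a)$, and the Lemma gives $H_*(X,\chi) \cong H_*(X,\chi')$. Taking $\chi'$ to be (a conjugate of) the given representation with cyclic image identifies its twisted homology with that of the $\Z_p$-representation.

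Finally I would identify $H_*(X, \chi)$ for the $\Z_p$-valued character with $H_*(X_p, \F)$. This is the content of the general machinery already set up: for $M = \F[\Z_p]$ viewed as a module over itself, the twisted chain complex $\F[\Z_p] \otimes_{\F[\Z_p]} C_*(X_{\Z_p}, \F)$ is isomorphic as an $\F$-chain complex to $C_*(X_p, \F)$, where $X_p$ is the cover corresponding to $\ker(\pi \to \Z_p)$, i.e. the $p$-fold cyclic cover. Here the canonical $\GL(p,\F)$ representation of $\calm(1,p,1) = \Z_p$ is just the regular representation of $\Z_p$, so $\{\F^p\} \cong \F[\Z_p]$ as modules, and the argument from the ``Review of twisted homology groups'' subsection applies verbatim with $G = \Z_p$ and $X_G = X_p$.

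The only genuine subtlety—and the step I expect to require the most care—is the first one: justifying that a cyclic image can be conjugated into $\langle t \rangle$. One must rule out, or correctly handle, cyclic images that meet $\langle r\rangle$ nontrivially; since meridians map to elements of the form $tr^i$ and these generate the image, any such element has order a multiple of $p$ (its $p$-th power lies in $\langle r\rangle$ and equals $r^{i(1+a+\cdots+a^{p-1})}$), so cyclicity of the image forces that power to be trivial, whence the generator has order exactly $p$ and, being a conjugate of $t$ up to a power, generates a conjugate of $\langle t\rangle$. Spelling this out cleanly is the main obstacle; everything after it is a direct appeal to the Lemma and to the chain-level identification already established.
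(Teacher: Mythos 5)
Your second and third steps are exactly the paper's argument: the paper's proof consists of observing that the image lies in $\calm(1,p,a)\cong\Z_p$ and then applying Theorem~\ref{thm-local} (equivalently, the subgroup lemma immediately preceding the corollary, with $d_1=1$), so that the twisted homology is the $1$-eigenspace of the trivial order-$d$ deck action on the $p$-fold cover, i.e.\ $H_*(X_p,\F)$. Your chain-level version---the canonical representation of $\calm(1,p,1)$ is the regular representation of $\Z_p$, so $\{\F^p\}\cong\F[\Z_p]$ and the twisted complex is $C_*(X_p,\F)$---is a correct rendering of the same identification.

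The step you yourself flag as the crux, however, contains a genuine error. The deduction ``cyclicity of the image forces $(tr^i)^p=r^{i(1+a+\cdots+a^{p-1})}$ to be trivial'' is a non sequitur: the subgroup $\langle tr^i\rangle$ is cyclic whatever its order, so cyclicity of the image imposes no constraint on $(tr^i)^p$. The issue is not merely cosmetic. If $\gcd(a-1,d)>1$, say $a\equiv 1\pmod{d_1}$ for some $d_1\mid d$ with $1<d_1$ and $\gcd(d_1,p)=1$, then $t$ commutes with $r^{d/d_1}$ and $tr^{d/d_1}$ generates a cyclic subgroup of order $pd_1$ that is not conjugate into $\langle t\rangle$; for such a $\chi$ the conclusion can actually fail. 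For instance, with $X=S^1$ and $\chi$ sending the generator to $tr^{d/d_1}$, one has $\Psi(tr^{d/d_1})=\zeta^{d/d_1}\Psi(t)$ (a nontrivial scalar times the permutation matrix), whose eigenvalues never equal $1$, so $H_1(S^1,\chi)=0$ while $H_1(X_p,\F)=\F$. So either one reads ``cyclic image'' as the paper implicitly does---the image is contained in (a conjugate of) the canonical copy of $\Z_p=\calm(1,p,a)$, which is how the corollary is used later for abelian knot-group representations---or one must add a hypothesis such as $\gcd(a-1,d)=1$ (automatic in the dihedral case with $d$ odd), under which your reduction does hold: conjugation by $r^s$ carries $tr^i$ to $tr^{\,i+s(a^{-1}-1)}$, and since $a^{-1}-1$ is then invertible mod $d$ one may arrange $i=0$. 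With that repair the remainder of your argument goes through and coincides with the paper's proof.
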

\begin{proof}
Let $\phi\co \pi_1(X)\to \langle t\rangle\cong \Z_p$ be the homomorphism
induced by $\chi$. Restricted to $\langle t\rangle$, the coefficient module
is naturally identified with $\F[\Z_p]$. Hence the twisted chain complex is
\[
    C_*(X,\chi)
    =
    \F[\Z_p]\otimes_{\F[\pi_1(X)]} C_*(\widetilde X;\F),
\]
which is precisely the cellular chain complex of the cover corresponding
to $\ker(\phi)$. This cover is $X_p$, so taking homology gives the result.
\end{proof}


\section{Basic examples, abelian knot group representations}\label{sec-basic}

Some basic examples and consideration of the case of abelian representations will clarify the theory and serve as foundations for later work.

Recall that $X(K)$ has the homotopy type of a CW complex with $(c-1)$ 2-cells, $c$ 1-cells and a single 0-cell. Thus, homology groups in gradings greater than $2$ are necessarily trivial; that is, $H_i(X(K))=0$ for $i>2$.

\begin{theorem} If $\chi \co \pi_K \, \to\, \calm(d,p,a)$ has   image $\langle t\rangle$, and $\ch(\F) = 0$ or $\ch(\F)$ does not divide $\big| H_1(\overline{X}_p(K), \Z)\big|$, then  $H_0(X(K), \chi)  \cong \F$, $H_1(X(K),\chi) \cong \F$, and $H_2(X(K), \chi) = 0$.
\end{theorem}

\begin{proof}  The proof of~\cite[Lemma 2]{MR900252} applies in the case of knot complements and with $p=2$ replaced with an arbitrary prime integer, to show that $H_1(\overline{X}_p(K), \Z_p) = 0$.  In particular, $H_1(\barX_p(K),\Z)$ is finite.  The lift of $K$ is $\Z$-null homologous, so there is a splitting $H_1(X_p(K)) \cong H_1(\overline{X}_p(K)) \oplus \Z$, with the $\Z$ summand generated by the meridian of the lift of $K$.   Applying the universal coefficient theorem to change  to $\F$ coefficients along with Corollary~\ref{cor-cyclic}  yields the desired result.
\end{proof}

\begin{corollary} For the space $S^1$ and representation $\chi\co \pi_1(S^1) \, \to\, \calm(d,p,a)$ sending the generator to $t$, we have $H_0(S^1 ,\chi) \cong \F$ and $H_1(S^1, \chi) \cong \F$.
\end{corollary}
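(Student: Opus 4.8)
The plan is to deduce this from the preceding theorem, whose proof applies essentially word-for-word with the knot exterior $X(K)$ replaced by $S^1$. First I would note that $\pi_1(S^1)\cong\Z$ is abelian, so the image of $\chi$ is cyclic and $\chi$ factors through the abelianization $H_1(S^1)\cong\Z$; in the applications the generator of $\pi_1(S^1)$ is a meridian, and it therefore maps to an element of the form $tr^\alpha$, which is carried onto a generator of $\Z_p$ by the projection $\calm(d,p,a)\to\Z_p$. In particular the composite $\pi_1(S^1)\to\Z_p$ is surjective, so the corresponding $p$-fold cyclic cover of $S^1$ is connected.

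Next I would apply Corollary~\ref{cor-cyclic} (equivalently, rerun the eigenspace argument from the proof of the previous theorem): since $\chi$ has cyclic image, $H_*(S^1,\chi)\cong H_*\big((S^1)_p,\F\big)$, where $(S^1)_p$ denotes the $p$-fold cyclic cover of $S^1$. The feature special to the circle is that this cover is again a circle --- the connected $p$-fold cover of $S^1$ is $S^1$ --- so $H_0\big((S^1)_p,\F\big)\cong\F$ and $H_1\big((S^1)_p,\F\big)\cong\F$, which is exactly the assertion. In eigenspace terms the reason is just that the generating deck transformation of $(S^1)_p=S^1$ is a rotation, hence homotopic to the identity; it acts trivially on homology, so the relevant eigenspace is all of $H_*(S^1,\F)$.

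The one genuine point to watch --- and the step I would be most careful with --- is confirming that the $p$-fold cover entering the computation is connected, that is, that $\pi_1(S^1)\to\calm(d,p,a)\to\Z_p$ is onto, so that $(S^1)_p$ really is $S^1$ and not a disjoint union of $p$ circles (in which case $H_0$ and $H_1$ would be $\F^p$). For the meridional circles appearing in the twisted Mayer--Vietoris decomposition of a connected sum this holds automatically, as observed above. No separate treatment is needed in characteristic $p$: the cover $(S^1)_p=S^1$ is still a $\Z_p$-homology circle, so the characteristic-$p$ half of the proof of the preceding theorem carries over unchanged.
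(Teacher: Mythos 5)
Your proof is correct and follows essentially the paper's own route: the paper simply views $S^1$ as the exterior of the unknot and invokes the preceding abelian-image theorem (equivalently the cyclic-image corollary you cite), the point being that the $p$-fold cyclic cover of $S^1$ is again a circle. The connectivity issue you flag is exactly the implicit hypothesis that the generator of $\pi_1(S^1)$ is a meridian mapping to an element of the form $tr^\alpha$, which is how the corollary is used in the Mayer--Vietoris argument.
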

\begin{proof} One can view this result as the case of $K$ unknotted, which has complement a homotopy $S^1$. Of course, given the simple CW structure of $S^1$, it can also be verified directly.
\end{proof}

There is one other basic result.

\begin{theorem}    If $\chi \co \pi_1(X(K)) \, \to\, \calm(d,p,a)$ is nonabelian, then
\begin{itemize}
\item $\beta_0(X(K), \chi ) = 0$.
\item $\beta_1(X(K), \chi) = \beta_2(X(K), \chi) \ge 1$.
\end{itemize}
\end{theorem}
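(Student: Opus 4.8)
The plan is to establish the two displayed items by proving three things in turn: $\beta_0(X_K,\chi)=0$, the equality $\beta_1(X_K,\chi)=\beta_2(X_K,\chi)$, and the strict inequality $\beta_1(X_K,\chi)\ge 1$. The first is quick. By definition $H_0(X_K,\chi)$ is the space of coinvariants $\F^p/\sum_g\mathrm{im}(\Psi\chi(g)-I)$ for the canonical matrix representation $\Psi$. Since $\chi$ is nonabelian its image is a nonabelian subgroup of $\calm(d,p,a)$, and such a subgroup must meet the cyclic normal subgroup $\langle r\rangle$ nontrivially — otherwise it would inject into the abelian quotient $\Z_p$. Choosing $r^j$ in the image with $r^j\ne 1$, the matrix $\Psi(r^j)$ is diagonal with entries $\zeta^{ja^k}$, $k=0,\dots,p-1$, each of which is $\ne 1$ because $d\nmid j$ while $\gcd(a,d)=1$; hence $\Psi(r^j)-I$ is invertible, a single summand of the coinvariant quotient is already all of $\F^p$, and $\beta_0=0$. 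The same computation shows $H^0(X_K,\chi)=(\F^p)^{\mathrm{im}\,\chi}=0$, which I will reuse below.

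For $\beta_1=\beta_2$ I would use an Euler characteristic count: $X_K$ has the homotopy type of a CW complex with one $0$-cell, $c$ $1$-cells and $c-1$ $2$-cells, so the twisted chain complex has $\F$-dimensions $p$, $pc$, $p(c-1)$ and its Euler characteristic is $0$; thus $\beta_0-\beta_1+\beta_2=0$, and with $\beta_0=0$ this gives $\beta_1=\beta_2$. For the positivity I would argue by contradiction. If $H_1(X_K,\chi)=0$, then by the above $H_*(X_K,\chi)$ vanishes in every degree, so the long exact sequence of the pair $(X_K,\partial X_K)$ yields $H_3(X_K,\partial X_K;\chi)\cong H_2(\partial X_K;\chi)$. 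Now conjugate $\chi$ (which does not change twisted homology) so that some meridian has $\chi(m)=t$. Then $\chi$ restricted to $\partial X_K=T^2$ factors through $\langle t\rangle\cong\Z_p$: the longitude $\ell$ is null-homologous in $X_K$, hence lies in the subgroup of $\pi_1(X_K)$ corresponding to the $p$-fold cyclic cover $X_p(K)$, and a Seifert surface for $K$ lifts to $X_p(K)$ (its interior curves have zero linking number with $K$), so $\ell$ is null-homologous in $X_p(K)$ and is therefore killed by the map $H_1(X_p(K))\to H_1(\barX_p(K))\to\Z_d$ through which $\chi|_{\pi_1(X_p(K))}$ factors; thus $\chi(\ell)=1$. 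On $T^2$ the meridian therefore acts by the cyclic permutation matrix $\Psi(t)$ and the longitude acts trivially, and a Künneth computation gives $H_2(\partial X_K;\chi)\cong\ker(\Psi(t)-I)\cong\F\ne 0$ (the all-ones vector). But twisted Poincaré--Lefschetz duality for the compact oriented $3$-manifold $X_K$ gives $H_3(X_K,\partial X_K;\chi)\cong H^0(X_K;\chi)=0$. This contradiction forces $H_1(X_K,\chi)\ne 0$, so $\beta_1=\beta_2\ge 1$.

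The formal steps ($\beta_0=0$ and $\beta_1=\beta_2$) should be routine; I expect the main obstacle to be the strict inequality $\beta_1\ge 1$, and inside that, two inputs. The first is that every metacyclic representation of a knot group kills the longitude: this rests on the Seifert surface lifting to the $p$-fold cyclic cover together with the identification (established earlier) of $\chi|_{\pi_1(X_p(K))}$ with a homomorphism that factors through $H_1(\barX_p(K))$. The second is invoking twisted Poincaré--Lefschetz duality in the form $H_3(X_K,\partial X_K;\chi)\cong H^0(X_K;\chi)$, which involves no dualizing of the coefficient system and so is valid over a field of any characteristic; one should also note separately that $\ker(\Psi(t)-I)$ is exactly one-dimensional over any field, since the eigenvalue $1$ of the cyclic permutation matrix is simple when $\mathrm{char}\,\F\ne p$ and $\Psi(t)$ is a single Jordan block when $\mathrm{char}\,\F=p$.
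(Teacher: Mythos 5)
Your proof is correct, but it reaches the crucial inequality by a genuinely different route from the paper. The paper works in the $dp$-fold metacyclic cover: using the eigenspace identification of Theorem~\ref{thm-local}, it gets $\beta_0=0$ because $H_0$ of the cover is $\F$ with trivial deck action, and it gets $\beta_2\ge 1$ by noting that the boundary of the cover consists of $d$ tori cyclically permuted by the order-$d$ deck transformation $R$, that the kernel of $H_2(\partial,\F)\to H_2$ is spanned by the full sum $\sum_i R_*^i[B]$, and that the $\zeta$-eigenvector $\sum_i\zeta^{-i}R_*^i[B]$ is not proportional to that sum, hence maps to a nonzero $\zeta$-eigenclass. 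You instead stay on $X_K$ with local coefficients: $\beta_0=0$ via invertibility of $\Psi(r^j)-I$ (same fact, different packaging), the Euler-characteristic step is identical, and positivity is obtained by contradiction from the long exact sequence of $(X_K,\partial X_K)$, the computation $H_2(T^2;\chi)\cong\ker(\Psi(t)-I)\ne 0$, and twisted Lefschetz duality $H_3(X_K,\partial X_K;\chi)\cong H^0(X_K;\chi)=0$. Both arguments rest on the same geometric input -- the peripheral subgroup maps onto the order-$p$ cyclic group generated by the meridian image -- but you make explicit what the paper leaves tacit: that the longitude dies (via the Seifert surface lifting to $X_p(K)$ and the factorization of $\chi$ on $\pi_1(X_p(K))$ through first homology), which is exactly what is needed for the paper's claim that the boundary of the cover is $d$ tori permuted in a single $d$-cycle. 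Your normalization $\chi(m)=t$ matches the paper's standing convention for metacyclic representations, and even without it your argument only needs $1$ to be an eigenvalue of $\Psi(\chi(m))$, i.e.\ that the meridian image has order $p$, the same hypothesis the paper's proof uses implicitly. What each approach buys: the paper's covering-space argument exhibits an explicit nonvanishing class and sits naturally inside its eigenspace framework, while yours avoids covering spaces altogether, is uniform in the characteristic of $\F$, and documents the peripheral behavior of metacyclic representations, which is of independent use elsewhere in the paper.
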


\begin{proof}
We can assume $\chi$ is onto by replacing the target group with the image of $\chi$.   Given that, the $d$-fold cover of $X_p(K)$ is connected. Denote that cover by $X_\chi$. Since $H_0(X_\chi, \F) \cong \F$ and the order $d$-deck transformation acts trivially, the $\zeta$-eigenspace in $H_0(X_\chi, \F) $ is trivial, implying $\beta_0(X(K), \chi ) = 0 $. Given this, an Euler characteristic argument implies that $\beta_1(X(K), \chi) = \beta_2(X(K), \chi) $. We now prove that $\beta_2(X(K), \chi) \ge 1.$

The $dp$-fold metacyclic cover of $X(K)$ is formed as a $d$-fold cyclic cover of the $p$-fold cyclic cover of $X(K)$. The boundary of $X_\chi$ consists of $d$ tori cyclically permuted by the order $d$ deck transformation $R$. To see this, note that   on the fundamental group of the boundary torus, the longitude represents trivially since it is in the second commutator subgroup, implying that the image of the fundamental group of the boundary torus is $\langle t \rangle \in \calm(d,p,a)$, which is of index $d$.

 Denote the boundary components $\{B, RB, \ldots , R^{d-1}B\}$, where $B$ is a torus and $R$ is the deck transformation. As for all compact connected three-manifolds, the kernel of the inclusion of the second homology of the boundary into the second homology  is $1$-dimensional, generated by the sum of the classes represented by the boundary components: $\sum_{i = 0}^{d-1} R_*^i [B]$. The $\zeta$-eigenspace of the action on the boundary is not in the kernel; it is represented by $ \sum_{i = 0}^{d-1}\zeta^{-i} R_*^i [B]$.\end{proof}


\section{Mayer--Vietoris}\label{sec-mv}

Suppose $K = K_1 \cs K_2$ is a nontrivial decomposition. Then $X(K) \cong X(K_1)\cup_{A} X(K_2)$ where $A$ is an annulus. To simplify notation, we write $X(K) = X$, $X(K_1) = X_1$ and $X(K_2) = X_2$. If $\chi \co \pi_1(X) \, \to\, \calm(d,p,a)$, then we denote its restrictions to the two summands by $\chi_1$ and $\chi_2$, and its restriction to $A$ by $\chi_A$. There is the Mayer--Vietoris sequence
\begin{equation*}
\begin{split}
0 \, \to\, H_2(X_1, \chi_1 ) \oplus H_2(X_2,\chi_2) \xrightarrow{\psi_2}
H_2(X, \chi ) \xrightarrow{\delta_2} & H_1(A, \chi_A) \xrightarrow{\text{inc}_1} \\
\phantom{\hskip.3in} H_1(X_1,\chi_1) \oplus H_1(X_2,\chi_2)\xrightarrow{\psi_1}& H_1(X,\chi)\xrightarrow{\delta_1}
H_0(A, \chi_A)\xrightarrow{ } H_0(X_1, \chi_1 ) \oplus H_0(X_2,\chi_2)
\end{split}
\end{equation*}

\begin{theorem}\label{thm-abelian}
For the knot $K$, assume that the underlying field $\F$ used for the linear representation of $\calm(d,p,a)$ satisfies either $\ch(\F) = 0$ or that $\ch(\F)$ does not divide $\big| H_1(\overline{X}_p(K), \Z)\big|$.  
If $ \chi \co \pi_1(X(K)) \, \to\, \calm$ restricts so that $\chi_1$ has image $\langle t \rangle$ and $\chi_2$ is nonabelian, then
\begin{itemize}
\item $\beta_2(X, \chi ) = \beta_2(X_2, \chi_2 )$.
\item $ \beta_1(X , \chi ) = \beta_1(X_2, \chi_2 ).$
\end{itemize}
\end{theorem}
\begin{proof} We have that $H_1(A, \chi_A) \cong H_0(A, \chi_A) \cong \F$. Similarly, $H_1(X(K_1), \chi_1) \cong H_0(X(K_1), \chi_1) \cong \F$. The inclusion maps are injective: for $i = 0 $ and $1$, the map  $\operatorname{inc}_i\co   H_i(A, \chi_A) \to  H_i(X_1,\chi_1)$ is an isomorphism of the one-dimensional vector space $\F$. We thus have two sequences:

\[ 0 \, \to\, H_2(X_1, \chi_1 ) \oplus H_2(X_2,\chi_2) \xrightarrow{\psi_2} H_2(X(K), \chi) \, \to\, 0\]
and

\[ 0 \, \to\, H_1(A, \chi_A) \, \to\, H_1(X_1, \chi_1 ) \oplus H_1(X_2,\chi_2) \xrightarrow{\psi_1} H_1(X(K), \chi) \, \to\, 0.\]
For the abelian representation, $H_2(X_1, \chi_1) = 0$ and $H_1(X_1, \chi_1) \cong H_1(A, \chi_A) \cong \F$.  The two stated equalities are now apparent.\end{proof}

\begin{theorem} \label{thm-bettiadditivity} If $ \chi \co \pi_1(X(K)) \, \to\, \calm(d,p,a)$ restricts so that $\chi_1$ and $\chi_2$ are nonabelian, then
$\beta_2(X, \chi )=\beta_2(X_1, \chi_1 ) + \beta_2(X_2, \chi_2 ) +\epsilon$, where $\epsilon = 0 $ or $1$ depending on whether $\operatorname{inc}_1$ is injective or not.
\end{theorem}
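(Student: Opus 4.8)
The plan is to push the Mayer--Vietoris sequence
\begin{equation*}
\begin{split}
0 \to H_2(X_1, \chi_1 ) \oplus H_2(X_2,\chi_2) \xrightarrow{\psi_2}
H_2(X, \chi ) \xrightarrow{\delta_2} & H_1(A, \chi_A) \xrightarrow{\text{inc}_1} \\
H_1(X_1,\chi_1) \oplus H_1(X_2,\chi_2)\xrightarrow{\psi_1}& H_1(X,\chi)\xrightarrow{\delta_1}
H_0(A, \chi_A)\to \cdots
\end{split}
\end{equation*}
and exploit everything we already know about the small pieces. First I would compute the homology of the gluing annulus $A$. Since $\chi_A$ is the restriction of $\chi$ to $\pi_1(A)\cong\Z$, which is generated by a meridian, and meridians map to elements $tr^\alpha$ of order exactly $p$ (the image is noncyclic only on the level of $t$), the restriction $\chi_A$ has image the cyclic group $\langle tr^\alpha\rangle\cong\Z_p$. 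By Corollary~\ref{cor-cyclic}, $H_*(A,\chi_A)\cong H_*(A_p,\F)$ where $A_p$ is the $p$-fold cover of the annulus, which is again an annulus (homotopy $S^1$); hence $H_0(A,\chi_A)\cong H_1(A,\chi_A)\cong\F$ and all higher groups vanish. So the interesting portion of the sequence reads
\begin{equation*}
0 \to H_2(X_1,\chi_1)\oplus H_2(X_2,\chi_2) \xrightarrow{\psi_2} H_2(X,\chi) \xrightarrow{\delta_2} \F \xrightarrow{\text{inc}_1} H_1(X_1,\chi_1)\oplus H_1(X_2,\chi_2) \xrightarrow{\psi_1} H_1(X,\chi)\xrightarrow{\delta_1}\F \to 0,
\end{equation*}
where exactness at the last $\F$ follows because $H_0(A,\chi_A)\to H_0(X_1,\chi_1)\oplus H_0(X_2,\chi_2)$ is the zero map: both $\chi_i$ are nonabelian, so by the last theorem of Section~``Basic examples'' one has $\beta_0(X_i,\chi_i)=0$, i.e. $H_0(X_i,\chi_i)=0$.

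The second step is an Euler characteristic bookkeeping. From the nonabelian case we have $\beta_1(X_i,\chi_i)=\beta_2(X_i,\chi_i)$ for $i=1,2$, and likewise (the nonabelian theorem applies to $X$ itself, since $\chi$ nonabelian on both factors forces $\chi$ nonabelian) $\beta_0(X,\chi)=0$ and $\beta_1(X,\chi)=\beta_2(X,\chi)$. Write $b_i=\beta_2(X_i,\chi_i)$ and $b=\beta_2(X,\chi)$. Taking alternating sums of dimensions along the six-term exact sequence above gives
\[ (b_1+b_2) - b + 1 - 1 + (b_1+b_2) - b + 1 = 0, \]
hence $2(b_1+b_2) - 2b + 1 = 0$ --- but this is odd, which signals that I have mis-tracked a term: the two copies of $\F$ coming from $H_*(A,\chi_A)$ and the single $\F$ at the tail must be reconciled with the fact that $\beta_1(X_i,\chi_i)=b_i$ already. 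Re-running the count carefully with the correct six terms $H_2(X_1)\oplus H_2(X_2),\,H_2(X),\,H_1(A),\,H_1(X_1)\oplus H_1(X_2),\,H_1(X),\,H_0(A)$ gives $(b_1+b_2)-b+1-(b_1+b_2)+b-1=0$, which is automatically satisfied and carries no information; so the Euler characteristic alone does not pin down $b$. Instead the content must come from analyzing the rank of the single connecting map $\delta_2\colon H_2(X,\chi)\to H_1(A,\chi_A)\cong\F$, whose image has dimension $0$ or $1$. If $\operatorname{im}\delta_2=0$ then $\psi_2$ is onto, so $b=b_1+b_2$ and $\epsilon=0$; if $\operatorname{im}\delta_2=\F$ then $b=b_1+b_2+1$ and $\epsilon=1$. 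A parallel (and in fact forced, by exactness) statement holds at the $H_1$ level: $\ker(\text{inc}_1)=\operatorname{im}\delta_2$, so $\text{inc}_1$ is injective precisely when $\epsilon=0$, and then $\psi_1$ has one-dimensional kernel, giving $\beta_1(X,\chi)=b_1+b_2$, consistent with $\beta_1=\beta_2$; when $\epsilon=1$, $\text{inc}_1=0$, $\psi_1$ is injective, and $\beta_1(X,\chi)=b_1+b_2+1$. Either way $\epsilon\in\{0,1\}$ is exactly the dimension of $\operatorname{im}\delta_2$, which proves the theorem.

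The one genuine point requiring care --- and what I expect to be the main obstacle --- is the identification $H_*(A,\chi_A)\cong H_*(S^1,\F)$ with the \emph{correct} dimensions, i.e. verifying that $\chi_A$ really does have cyclic (indeed order-$p$) image so that Corollary~\ref{cor-cyclic} applies rather than the generic nonabelian computation. This rests on the fact that $\pi_1(A)$ is generated by a single meridian $m$ of $K$ sitting on the decomposing sphere, and $\chi(m)=tr^\alpha$ generates a cyclic subgroup of order $p$ inside $\calm(d,p,a)$ (since $(tr^\alpha)^p$ is a product telescoping via $trt^{-1}=r^a$ and $a^p\equiv 1$). One should double-check there is no subtlety about whether the annulus $A$ is $\pi_1$-injective and which conjugacy class of meridian it carries, but this is standard for a connected-sum decomposition along an essential annulus in the knot exterior. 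Given that, the corollary yields $\beta_0(A,\chi_A)=\beta_1(A,\chi_A)=1$, and the rest is the exact-sequence chase above. I would also remark that the same argument, combined with Theorem~\ref{thm-abelian}, recovers the abelian/nonabelian mixed case as the degenerate instance where one $b_i$ term is replaced by the abelian values $\beta_1(X_i,\chi_i)=1=\beta_0$.
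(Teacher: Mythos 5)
Your argument is correct and is essentially the paper's proof: the paper also observes that $H_1(A,\chi_A)\cong\F$ and that the map $\mathrm{inc}_1$ out of it is either injective (so $\delta_2=0$, $\epsilon=0$) or zero (so $\delta_2$ is onto, $\epsilon=1$), which is exactly your dichotomy on $\dim\operatorname{im}\delta_2$. Your extra verifications (that $\chi_A$ has cyclic image so $H_0(A,\chi_A)\cong H_1(A,\chi_A)\cong\F$, and the consistency check at the $H_1$ level) only fill in details the paper takes from the preceding theorem's proof, and your abandoned Euler-characteristic detour does no harm since you correctly discard it.
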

\begin{proof} 
Since $H_2(A,\chi_A)=0$, exactness gives
\[
0\to H_2(X_1,\chi_1)\oplus H_2(X_2,\chi_2)
\to H_2(X,\chi)
\to \ker(\operatorname{inc}_1)\to 0.
\]
The map $\operatorname{inc}_1$ has domain $H_1(A,  \chi_A) \cong \F$ and is either injective, in which case  $H_2(X, \chi) \cong  H_2(X_1,\chi_1)\oplus H_2(X_2,\chi_2)$ and we are in the $\epsilon = 0$ case, or the map is the zero map, in which case $H_2(X, \chi) \cong  H_2(X_1,\chi_1)\oplus H_2(X_2,\chi_2) \oplus H_1(A, \chi_A) $ and we are in the $\epsilon = 1$ case. \end{proof}


\section{Metacyclic primality testing}\label{sec-metatesting}
With the Mayer--Vietoris results available, we can now describe our approach to proving knot primality. In the interest of clarity, we begin with two examples illustrating the basic cases in which the value of $d$ in $\calm(d,p,a)$ is either a product of two distinct primes or is prime. Both examples are in the dihedral setting, $p=2$. An example in the case $p = 3$ is presented in Section~\ref{sec-finite}. The general approach is briefly summarized after the second example.

\subsection{Examples}
In this section, we let $\F = \Q(\zeta_d)$. (Recall that $\zeta_d$ depends on the choice of $d$ for the metacyclic group.)

\begin{example}\label{ex-912} The knot $K = 9_{12}$ is the prime knot of smallest crossing number having a nontrivial positive-symmetric factorization.
\[ \omk = \big(2 s^2 t^2 + 3 s t + 2\big)\big( s^2 t^2 + 3 s t + 1\big). \]

If $K$ were to split as a connected sum, modulo reordering we would necessarily have that $H_1(\overline{X}_2(K_1)) \cong \Z_5$ and $H_1(\overline{X}_2(K_2) ) \cong \Z_7$. There is a unique (up to conjugation and Galois automorphism) surjective representation $\chi \co \pi_1(X(K)) \, \to\, D_{2\cdot 35}$ and also unique surjective representations $\chi_1$ and $\chi_2$ to $D_{2\cdot 5}$ and $D_{2\cdot 7}$.

Computations yield that $\beta_2(X(K), \chi) = \beta_2(X(K), \chi_1) = \beta_2(X(K), \chi_2) = 1$, yielding the failure of additivity for connected sums that is implied by Theorem~\ref{thm-bettiadditivity}.
\end{example}

\begin{example}
In the previous example, we had $H_1(\overline{X}_2(K)) \cong \Z_{r} \oplus \Z_s$, where $r$ and $s$ were distinct primes. Here we consider the case that $H_1(\overline{X}_2(K)) \cong \Z_{r} \oplus \Z_r$ for a single prime $r$. In this case, the group of dihedral representations $\calm( d, 2, -1)$ is a 2-dimensional vector space; this introduces some new structure that can be exploited.

We let $K = 10_{123}$. For this knot, one has
\[ \omk = (s^4 t^4 + 3 s^3 t^3 + 3 s^2 t^2 + 3 st + 1)^2.\]

A standard computation shows that $H_1(\overline{X}_2(K)) \cong \Z_{11} \oplus \Z_{11}$. Recall that there is a correspondence between conjugacy classes of representations of $\pi$ to $D_{2\cdot{11}}$ and nonzero elements of $H_1(\overline{X}_2(K))$. The automorphism of $\Z_{11} \oplus \Z_{11}$ given by multiplication by a nonzero element of $\Z_{11}$ corresponds to an automorphism of the dihedral group. This does not change the twisted Betti numbers, so we can view the twisted Betti numbers as functions on the projective space associated to the vector space $ \Z_{11} \oplus \Z_{11}$, usually denoted $\mathbb{P}^1(\Z_{11})$. There are 12 elements in the projective space; these are represented by $(1,0), (0,1)$ and $(1, a)$ for $1 \le a \le 10$.
If $\chi_1$ and $\chi_2$ are chosen as basis elements of $ \Z_{11} \oplus \Z_{11}$, then the projective space consists of the elements represented by $ \chi_1, \chi_2$ and $\chi_1 + i\chi_2$ for $0 < i < 11$.

Suppose now that $\chi_1$ and $\chi_2$ were chosen to be generators associated to a hypothesized splitting of $K$ as $K_1 \cs K_2$. If we let $a = \beta_2(  {X}(K_1), \chi_1) \ge 1$ and
$b= \beta_2( {X}(K_2), \chi_2) \ge 1$, then the 12 values of $\beta_2$ on the projective space would be
\[ \{ a, b\} \cup \{ a+b + \epsilon_i\}_{i = 1}^{10},\]
where for each $i$, $\epsilon_i \in \{0,1\}$.

A computation of the values yields the 12 integers: $\{1,1,1,1,1,1,1,1,1,1,2,2\}$. This implies that $a=1=b$, but this would imply that each of the remaining entries is at least 2. This contradiction shows $K$ cannot split.
\end{example}

\subsection{The general approach}
\begin{enumerate}
\item Enumerate all positive-symmetric factorizations $\omk = \Omega_1(s,t)\Omega_2(s,t)$. Apply the Jones polynomial tests for each such factorization to limit the factorizations of $\omk$ that might correspond to a nontrivial factorization $K = K_1 \cs K_2$. (In our program, computations of other polynomial invariants are not done unless the following faster tests fail to prove primality.)

\item Consider each such potential splitting $K = K_1 \cs K_2$ and use the values of $ \Omega_1(s,t)$ and $\Omega_2(s,t)$ to determine the orders of the homology groups $H_1(\overline{X}_p(K_1))$ and $H_1(\overline{X}_p(K_2))$. Check that these are consistent with the homology group $H_1(\overline{X}_p(K))$. In large surveys of knots we have found that restricting to $p=2, 3, 5,$ and $7$ offers the correct balance of effectiveness and speed before moving to more sophisticated obstructions.

\item Beginning with the prime $p=2$, find each pair of primes $d_1$ and $d_2$ dividing the orders $\big|H_1(\overline{X}_p(K_1))\big|$ and $\big|H_1(\overline{X}_p(K_2))\big|$. Assume first that $d_1\ne d_2$ and restrict attention to pairs for which $d_1$ does not divide $\big|H_1(\overline{X}_p(K_2))\big|$ and $d_2$ does not divide $\big|H_1(\overline{X}_p(K_1))\big|$.

\item  For $i=1$ and $i =2$, choose  integers $a_i$ of multiplicative order exactly $p$ in $\Z_{d_i}$. (Note, for this to exist, both  $d_1$ and $d_2$ must be congruent to  $1 \mod p$.)  By the Chinese Remainder Theorem, we can use $a_1$ and $a_2$ to  find an integer $a$ representing an element of multiplicative order exactly $p$ in $\Z_{d_1d_2}$.    Find all representations $\chi$ of $\pi_K$ into $\calm(d_1d_2, p, a)$. Compute the value of $\beta_2( {X}(K), \chi)$. The value of $\beta_2( {X}(K_1), \chi_1)$ equals that of $\beta_2( {X}(K), \chi_1)$, where $\chi_1 \co \pi_K \, \to\, \calm(d_1,p,a)$ is defined via the projection $\calm(d_1d_2,p,a) \, \to\, \calm(d_1,p,a)$. Similarly for $\beta_2({X}(K_2), \chi_2)$. Check the conditions given by Theorem~\ref{thm-bettiadditivity}.

\item Carry out a similar check in the case that $d_1 = d_2$.

\item Repeat steps~(3)-(5) for larger values of $p$.
\end{enumerate}


\section{Using finite fields versus cyclotomic fields}\label{sec-finite}
Initially, it might seem that it is natural to work in cyclotomic fields $\Q(\zeta)$. The following example illustrates the advantages of finite fields.

\begin{example} For the knot $15_{n139630}$ we can conclude from an analysis of the Heegaard Floer polynomial that if $K = K_1 \cs K_2$, then for the corresponding splitting of the $3$-fold branched cover, $\overline{X}_3(K) = \overline{X}_3(K_1) \cs \overline{X}_3(K_2)$, we have $H_1( \overline{X}_3(K_1) ) \cong \Z_7 \oplus \Z_7$ and $H_1(\overline{X}_3(K_2)) \cong \Z_{13} \oplus \Z_{13}$. One can compute all metacyclic representations and that there is a representation $\chi $ to $ \calm(91, 3, 16)$ which restricts to representations $\chi_1$ and $\chi_2$ taking values in $\calm(7, 3,  2)$ and $ \calm(13, 3, 3)$. A computation using $\Q(\zeta)$ coefficients determines that $\beta_2(X(K), \chi_1) = \beta_2(X(K), \chi_2) = \beta_2(X(K), \chi) =1$. Thus, additivity fails and $K$ is proved to be prime.

Let $q = 547$, which is prime. The multiplicative group of nonzero elements in $\F_{547} $ is a cyclic group; one can compute that $2$ is a generator. Since $q - 1= 91\cdot6$, we have that a primitive $91$st root of unity in $\F_{q} $ is  $ 2^6 = 64 \mod q$. In this case, we can compute that the Betti numbers are the same as in the cyclotomic case, $\beta_2(X(K), \chi_1)_{\F_q} = \beta_2(X(K), \chi_2)_{\F_q} = \beta_2(X(K), \chi)_{\F_q} =1$. Again, this provides a proof of primality.

Working over the cyclotomic field, the total computation time using SageMath was roughly 1.6 seconds. Working over the finite field, the computation time was reduced to about 0.01 seconds. Thus, speed is increased by a factor of more than 100.\end{example}


\section{Computational methods}\label{sec-compmethods}
Details related to algorithms for computing twisted knot invariants are contained in such papers as~\cite{MR2652542, MR1670420, MR1273784}. In this section we provide an overview.

\subsection{PD notation, Wirtinger relations, and the Fox matrix}
An $n$-crossing diagram for a knot $K$ is determined by its PD notation, an element in $(\Z^4)^n$. It is straightforward to construct from the PD notation a Wirtinger presentation of the knot group
\[ \pi_K \cong\langle x_1, \ldots , x_n \ \big| \ w_1, \ldots , w_{n-1}\rangle, \] where each $w_i $ is of the form $x_{a_i}x_{b_i} (x_{a_i})^{-1} (x_{c_i})^{-1}$. The knot complement has the homotopy type of a $2$-complex with a single $0$-cell, $n$ $1$-cells, and $(n-1)$ $2$-cells. We will denote such a complex by $X'(K)$. The Wirtinger presentation corresponds to such a decomposition.

The Fox matrix associated to the Wirtinger presentation is a $(n-1) \times n$ matrix with entries in the group ring of the free group generated by $\{x_1, \ldots , x_n\}$. Row $i$ is determined by the Wirtinger relation $w_i$ and has the following nonzero entries: in column $a_i$ there is $1 - x_{c_i}$; in column $b_i$ there is the entry $x_{a_i}$; and in column $c_i$ there is the entry $-1$. All other entries are $0$.

\subsection{%
     \texorpdfstring
       {The chain complex $C_*(\widetilde{X'}(K))$ and the twisted homology $H_2(X(K),\chi)$}%
       {The chain complex of X prime of K and twisted H2}%
   }
The CW chain complex of the universal cover of $X'(K)$ is a free $\Z[\pi_K]$-module with generators corresponding to the cells of $X'(K)$. The second boundary map $\partial_2 \co C_2(\widetilde{X'}(K)) \, \to\, C_1(\widetilde{X'}(K))$ is given by the transpose of the Fox matrix, with the $x_i$ now representing elements in $\pi_K$.

We now consider the boundary map for the chain complex with coefficients twisted by a representation $\chi \co \pi_K \, \to\, \GL(\F^p)$; this is denoted
$\partial_2^\chi\co C_2(\widetilde{X'}(K), \chi) \, \to\, C_1(\widetilde{X'}(K), \chi)$. The map is represented by a $pn \times p(n-1)$ matrix built by replacing each entry in the transposed Fox matrix with a $p\times p$ matrix. The substitution consists of applying the map from the group ring $\F[\pi_K]$ to the algebra of  $p \times p$ matrices with entries in $\F$.

The twisted homology group $H_2(X(K), \chi)$ is isomorphic to the kernel of the twisted map $\partial_2^\chi$. If one wants to compute the first homology, $H_1(X(K), \chi)$, the boundary map $\partial_1^\chi$ is easily written down. Programs such as SageMath contain routines for computing the homology of chain complexes with $\Z$ or field coefficients. As an alternative, as observed in~\cite{MR1670420} (which extended~\cite{MR1273784}), $H_1(X(K), \chi)$ can be computed directly from the second boundary map.

\subsection{%
    \texorpdfstring{The order of \(H_1(\barX_p(K))\)}%
     {The order of H1 of the p-fold branched cover of K}%
     }
Given polynomials $f(x), g(x) \in R[x]$ of degree $m$ and $n$, the \textit{resultant} is defined by
\[ \res(f,g) = (-1)^{mn}a^nb^m \prod_{i,j}( \alpha_i - \beta_j), \]
where $a$ and $b$ are the leading coefficients of $f$ and $g$, and the  indices for $\alpha_i$ and $\beta_i$  run over the full  set of roots counted with multiplicity. Factoring $f$ into linear factors reveals that this can be rewritten as
\[ \res(f,g) =  b^m \prod_i f(\beta_i) .\]

Fox's formula for the order of the homology of the $p$-fold cyclic branched cover of $K$ becomes
\[\big| H_1(\barX_p(K))\big| = \big| \res(\Delta_K , t^p -1)\big|.\]

Sylvester's matrix formula expresses the resultant in terms of a matrix; see, for instance,~\cite{MR1878556}. From this, a simple matrix manipulation provides the following result.

\begin{proposition} Let $\overline{\Delta}(t)= \sum_{i = 0}^{p-1} a_i t^i \in \Z[\Z_p] $ denote the image of the Alexander polynomial of $K$ in the quotient ring $\Z[\Z_p]$. Let $M$ denote the $p \times p$ matrix with $(i,j)$ entry $a_{\overline{i - j}}$, where $0 \le \overline{i - j} \le p -1$ and $i-j \equiv \overline{i-j} \pmod p $. Then
$\big| H_1(\barX_p(K))\big| = | \det(M) |$.
\end{proposition}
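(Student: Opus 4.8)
**

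The plan is to unwind the resultant formula $\big| H_1(\barX_p(K))\big| = \pm\res(\Delta_K, t^p - 1)$ via Sylvester's matrix and then perform elementary row and column operations to collapse the Sylvester matrix down to the $p\times p$ circulant-type matrix $M$. First I would recall that Sylvester's matrix for $\res(\Delta_K, t^p-1)$ is a square matrix of size $(\deg\Delta_K + p)$ whose rows consist of shifted copies of the coefficient vector of $\Delta_K$ (there are $p$ such rows) together with $\deg\Delta_K$ rows consisting of shifted copies of the coefficient vector of $t^p - 1$, namely a $1$ followed by zeros followed by a $-1$. The key point is that the $\deg\Delta_K$ rows coming from $t^p-1$ encode exactly the relation $t^p = 1$: using these rows as pivots, one can perform column operations that add column $j+p$ into column $j$, for each relevant $j$, which has the effect of reducing all powers $t^i$ modulo $t^p - 1$. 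This is precisely the passage from $\Delta_K(t) \in \Z[t]$ to its image $\overline{\Delta}(t) = \sum_{i=0}^{p-1} a_i t^i \in \Z[\Z_p]$.

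The main steps, in order, would be: (1) write down the Sylvester matrix explicitly, blocked into the $\Delta_K$-rows and the $(t^p-1)$-rows; (2) clear the $(t^p-1)$-block by row/column operations — since each such row has a leading $1$, one can use it to eliminate entries in other rows, and after this reduction the determinant is unchanged up to sign and the surviving $p\times p$ block has $(i,j)$ entry equal to the coefficient $a_{\overline{i-j}}$ of $t^{\overline{i-j}}$ in $\overline{\Delta}$; (3) identify that surviving block with the matrix $M$ of the proposition statement, noting the indexing convention $1 \le \overline{j-i} \le p$; (4) conclude $\big| H_1(\barX_p(K))\big| = \pm\det(M)$, the sign ambiguity being harmless since we only assert the order. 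Alternatively, and perhaps more cleanly, one can bypass Sylvester entirely: the quotient ring $\Z[t]/(t^p-1) = \Z[\Z_p]$ is a free $\Z$-module of rank $p$ with basis $1, t, \ldots, t^{p-1}$, and multiplication by $\overline{\Delta}(t)$ on this module is represented precisely by the matrix $M$ (it is the matrix of a "circulant-like" convolution by the coefficient sequence of $\overline{\Delta}$). Since $\res(\Delta_K, t^p-1) = \pm\prod_{i=0}^{p-1}\Delta_K(\zeta_p^i) = \pm \det(\text{mult. by }\overline{\Delta}\text{ on }\Z[\Z_p])$ — the eigenvalues of this multiplication operator, after tensoring with $\C$ and diagonalizing via the characters of $\Z_p$, are exactly the values $\Delta_K(\zeta_p^i)$ — we get $\big| H_1(\barX_p(K))\big| = |\res(\Delta_K, t^p-1)| = |\det M|$.

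I would present the second approach as the primary argument, since it is conceptually transparent: the product $\prod_i \Delta_K(\zeta_p^i)$ from Fox's formula is, by definition, the determinant of the $\C$-linear operator "multiply by $\overline{\Delta}$" on $\C[\Z_p]$, and that operator has matrix $M$ in the standard basis because $t \cdot t^j = t^{j+1 \bmod p}$ makes $t$ act by the cyclic shift and $\overline{\Delta}(t) = \sum a_i t^i$ therefore acts by $\sum a_i(\text{shift})^i$, whose $(i,j)$ entry is $a_{\overline{i-j}}$. The only thing to check carefully is the bookkeeping: that the indexing convention stated in the proposition ($a_{\overline{i-j}}$ with $1 \le \overline{j-i}\le p$) matches the matrix of the shift operator in whichever ordering of basis vectors one uses, and that Fox's formula is being applied in the branched (not unbranched) case so that the product is over all $p$-th roots of unity including $1$ — which is exactly $\res(\Delta_K, t^p-1)$.

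The hard part, such as it is, is purely notational: matching the mod-$p$ index reduction and the transpose/ordering conventions so that the entry $(i,j)$ of $M$ genuinely is $a_{\overline{i-j}}$ rather than $a_{\overline{j-i}}$, and making sure the sign/absolute-value statement is phrased correctly (the resultant and the determinant each carry sign ambiguities that must be absorbed into the $\pm$). There is no substantive mathematical obstacle; the content is entirely the standard identification of a product of polynomial values at roots of unity with a determinant of a companion/circulant-type matrix, combined with Fox's theorem as already recalled in the Proposition on branched-cover homology.
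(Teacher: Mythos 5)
Your proposal is correct, and your primary argument is genuinely different from the paper's. The paper's route is: Fox's theorem gives $\big|H_1(\barX_p(K))\big| = \pm\res(\Delta_K, t^p-1)$, Sylvester's formula realizes the resultant as the determinant of a $(\deg\Delta_K + p)\times(\deg\Delta_K+p)$ matrix, and then row/column operations using the $(t^p-1)$-rows (which is exactly your first, ``alternative'' sketch) collapse it to the $p\times p$ matrix $M$. Your preferred argument skips the Sylvester matrix entirely: $M$ is the matrix of multiplication by $\overline{\Delta}$ on the rank-$p$ free module $\Z[\Z_p]$ with basis $1, t, \ldots, t^{p-1}$, and after tensoring with $\C$ the characters of $\Z_p$ diagonalize this operator with eigenvalues $\Delta_K(\zeta_p^j)$, $j = 0, \ldots, p-1$, so $\det M = \prod_j \Delta_K(\zeta_p^j)$, which is Fox's product directly. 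This is cleaner and self-contained (no resultant machinery needed beyond Fox's formula itself, which the paper has already stated), whereas the paper's route fits its surrounding exposition, which introduces the resultant and Sylvester's matrix as the computational tool. Your closing caution about bookkeeping is well placed: the paper's stated indexing convention is internally inconsistent (it mixes $\overline{i-j}$ and $\overline{j-i}$ and quotes a range $1 \le \overline{j-i} \le p$ against coefficients indexed $0$ through $p-1$), but since transposing a circulant or permuting rows changes the determinant by at most a sign, the $\pm$ in the conclusion absorbs all such discrepancies, exactly as you say.
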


\begin{example} If $\Delta_K(t) = 1-3t+4t^2-5t^3+4t^4-3t^5+t^6$, then $\overline{\Delta}_K (t) = -3 + t +t^2$ and the order of the homology of the $3$-fold branched cover of $K$ is the absolute value of the determinant of
\[ \begin{pmatrix}
-3& 1 & 1 \\
1 & -3 & 1 \\
1 & 1 &-3 \\
\end{pmatrix} . \]
\end{example}


\section{Families of knots and knots in homology spheres}\label{sec-homologyspheres}
In this section we briefly discuss two features of the use of Heegaard Floer invariants to study knot primality. We have chosen to limit this discussion to fairly simple examples in which the relevant computations are most straightforward and alternative approaches are available.

\subsection{Families of knots} Families of knots can share the same Heegaard Floer invariants. This was highlighted in~\cite{MR3782416} where it was shown that there are infinite families of knots with identical Heegaard Floer complexes.   The first of those families includes the knot $6_1$, with
$\Omega_K(s,t) = 	2s^{-1}t^{-1}+ 5 + 2st$. (In this case, primality easily follows from the fact that the knots are all of genus 1.) Further examples demonstrating that distinct knots can have identical Heegaard Floer complexes can be found in~\cite{MR3325731}. The extent to which Heegaard Floer invariants fail to distinguish knots is made apparent by the following observation: of the 2,977 prime knots of 12 or fewer crossings, there are 852 distinct values of $\Omega_K(s,t)$.

\subsection{Knots in homology spheres}
A knot $K$ in an oriented 3-manifold $Y$ is formally a pair $(Y,K)$, where $K$ is an oriented submanifold diffeomorphic to $S^1$.  A knot is called trivial if it bounds an embedded disk in $Y$.  The connected sum of knots is defined   as in the general setting of oriented smooth manifold pairs, and a connected sum is nontrivial if both summands are nontrivial knots. Notice that with these conventions, if $K\subset B^3 \subset Y$ is a prime knot in $B^3$, then the pair $(Y, K)$ also represents a prime knot in the homology sphere $Y$.  Recall that for this operation to be well-defined, the submanifolds must be connected. Prime decompositions of knots can be defined in a manner completely analogous to that for knots in $S^3$.
 
The Heegaard Floer-polynomial obstruction to decomposability developed in this paper extends to
knots in homology 3-spheres using the Ozsv\'ath-Szab\'o~\cite{MR2065507} connected sum formula for knot 
Floer homology and the  unknot-detection theorem of Ni~\cite{MR2240902} for knots in homology spheres.
 However, computing $\Omega_K(s,t)$ when $K$ is a knot in a homology sphere is more challenging.  To illustrate the potential of such an approach, we will give a brief outline of a  simple example. 

 Let $ K= T(2,9)$, let $Y= S_1(K)$  be 1-surgery on $K$, and let $K' \subset Y$ be represented by the meridian of $K$, which is isotopic to the core of the solid torus used to perform the surgery.  Note that $K$ and $K'$ have the same complement, so both have Alexander polynomial
 \[ \Delta(t) = (t^2 - t +1)(t^6 - t^3 +1). \]
 Both knots are of genus 4 and the Alexander polynomial itself does not prove primality of either knot.
 
 Techniques for computing the homology $\HFKh(Y,K')$ were detailed in~\cite{MR3782416} and illustrated  in~\cite{MR4497224}.  Hugo Zhou, in unpublished work, provided us with a general algorithm for $L$-space knots. In the present case, one computes
 \[ s^4t^4 \Omega_{K'}(s,t) =
s^{16}t^8 + s^{15}t^7 + (s^4 + s^8 + s^9)t^6 +s^5t^5 + (2 + s^3)t^4 + s^3t^3 + (1 +s^4 +s^5)t^2 + s^9t + s^8 .\]
 This is an irreducible polynomial, proving that $(Y,K')$ is prime.


\section{Summary of Computations and Computing Equipment} \label{sec-summary}
The  knot primality testing program we wrote in Sagemath~\cite{SageMath} is available at~\cite{AllenGitHubPrime}.  A Jupyter notebook with samples is also posted there. 

As a test of the results presented here, the primality criteria were applied to all prime knots of 15 crossings or less.  Lists of these knots are available at the KnotAtlas~\cite{KnotTheory}; a  list using a different naming convention is available at the Regina website~\cite{regina}.  Knots and their mirrors are listed only once.
 We restricted to $p = 2, 3, 5$ and $7$. Going beyond that rarely gave anything stronger. The results proved primality for 99.67\% of them. See the table below for the  count of the 313,230 prime knots with 3 to 15 crossings which were proved prime by each test. Here, once a knot was proved to be prime, it is not subjected to subsequent tests.

\smallskip
\begin{center}
\begin{tabular}{|c|c|}
\hline
Test \ref{test-1} & 247,357\\
\hline
Test \ref{test3} & 43,350\\
\hline
Test \ref{test2} & 8,745\\
\hline
Metacyclic tests & 12,745\\
\hline
\end{tabular}
\end{center}
\medskip

As a second test, we considered the full list of 1,315 non-hyperbolic prime knots with at most 20 crossings enumerated by Burton~\cite{burton:LIPIcs:2020:12183} and Thistlethwaite~\cite{MR4877260}. All were proved prime by the methods presented here. We note that all but 15 of these were proved prime by Tests 1, 2, and 3, and did not  require the metacyclic tests.

In~\cite{MR4934408}, He, Sedgwick, and Spreer described the construction of a program that determines if a given knot is prime. This is stronger than what we present here, in that our tests can prove that a knot is prime, but failure of the tests does not imply that the knot is composite. Not surprisingly, our tests are much faster. For instance,~\cite{MR4934408} reports that for the 376 satellite knots with 15 to 19 crossings it took 405 seconds to prove primality. The program based on the results of this paper took under four seconds (using a somewhat faster processor).  It proved  the primality of all 1,315 non-hyperbolic prime knots with up to 20 crossings in approximately 9 seconds.

\subsection{Processors} Our computations were done using an iMac with an M1 processor, running on a single core, using SnapPy within a SageMath environment. For contrast, the work of He, Sedgwick, and Spreer~\cite{MR4934408} used an Intel Core i5-7200U processor. General benchmarks indicate that the M1 setting is roughly twice the speed of the older Intel processor. Given the exponential growth in complexity, we estimate that for knots in the range of under 20 crossings our algorithms run perhaps 50 times as fast, and that beyond something like 25 to 30 crossings, our algorithms continue to run, whereas those of~\cite{MR4934408} are no longer effective. But again, the algorithms of~\cite{MR4934408} should be viewed as complementary to ours, as they are doing more, actually proving that knots are composite.


\bibliographystyle{amsplain}
\bibliography{BibTexComplete}

\providecommand{\bysame}{\leavevmode\hbox to3em{\hrulefill}\thinspace}
\providecommand{\MR}{\relax\ifhmode\unskip\space\fi MR }
\providecommand{\MRhref}[2]{%
  \href{http://www.ams.org/mathscinet-getitem?mr=#1}{#2}
}
\providecommand{\href}[2]{#2}
\begin{thebibliography}{10}

\bibitem{AllenGitHubPrime}
Samantha Allen and Charles Livingston, \emph{Knot primality tester},
  \url{https://github.com/samantha-allen/knot-primality-tester}, Accessed:
  2026-06-29.

\bibitem{2023arXiv231111089A}
Samantha Allen, Charles Livingston, Misha Temkin, and C.~M.~Michael Wong,
  \emph{{Using knot Floer invariants to detect prime knots}}, arXiv e-prints
  (2023), arXiv:2311.11089, to appear in Pac. J. Math.

\bibitem{2024arXiv240909032A}
Taylor {Applebaum}, Sam {Blackwell}, Alex {Davies}, Thomas {Edlich}, Andr{\'a}s
  {Juh{\'a}sz}, Marc {Lackenby}, Nenad {Toma{\v{s}}ev}, and Daniel {Zheng},
  \emph{{The unknotting number, hard unknot diagrams, and reinforcement
  learning}}, arXiv e-prints (2024), arXiv:2409.09032.

\bibitem{MR4760447}
John~A. Baldwin and Steven Sivek, \emph{Characterizing slopes for {$5_2$}}, J.
  Lond. Math. Soc. (2) \textbf{109} (2024), no.~6, Paper No. e12951, 64.
  \MR{4760447}

\bibitem{fsivek}
\bysame, \emph{Floer homology and non-fibered knot detection}, Forum Math. Pi
  \textbf{13} (2025), Paper No. e1, 65. \MR{4851701}

\bibitem{KnotTheory}
Dror Bar-Natan, \emph{The {Mathematica} package {KnotTheory\textasciigrave}},
  The Knot Atlas,
  \url{http://katlas.org/wiki/The_Mathematica_Package_KnotTheory}, 2023,
  Ongoing development; manual edition of September 14, 2005.

\bibitem{2023arXiv230604812B}
Keegan {Boyle}, Nicholas {Rouse}, and Ben {Williams}, \emph{{A classification
  of symmetries of knots}}, arXiv e-prints (2023), arXiv:2306.04812.

\bibitem{MR3156509}
Gerhard Burde, Heiner Zieschang, and Michael Heusener, \emph{Knots}, extended
  ed., De Gruyter Studies in Mathematics, vol.~5, De Gruyter, Berlin, 2014.
  \MR{3156509}

\bibitem{burton:LIPIcs:2020:12183}
Benjamin~A. Burton, \emph{{The Next 350 Million Knots}}, 36th International
  Symposium on Computational Geometry (SoCG 2020) (Dagstuhl, Germany) (Sergio
  Cabello and Danny~Z. Chen, eds.), Leibniz International Proceedings in
  Informatics (LIPIcs), vol. 164, Schloss Dagstuhl--Leibniz-Zentrum f{\"u}r
  Informatik, 2020, pp.~25:1--25:17.

\bibitem{regina}
Benjamin~A. Burton, Ryan Budney, William Pettersson, et~al., \emph{Regina:
  Software for low-dimensional topology}, {\tt http://\allowbreak
  regina-normal.\allowbreak github.\allowbreak io/}, 1999--2025.

\bibitem{2025arXiv250818263C}
Jason {Cantarella}, Andrew {Rechnitzer}, Henrik {Schumacher}, and Clayton
  {Shonkwiler}, \emph{{New upper bounds for stick numbers}}, arXiv e-prints
  (2025), arXiv:2508.18263.

\bibitem{MR900252}
A.~J. Casson and C.~McA. Gordon, \emph{Cobordism of classical knots}, \`A la
  recherche de la topologie perdue, Progr. Math., vol.~62, Birkh\"auser Boston,
  Boston, MA, 1986, With an appendix by P. M. Gilmer, pp.~181--199. \MR{900252}

\bibitem{2025arXiv250810864C}
Jeevan {Chandra}, \emph{{Statistics in 3d gravity from knots and links}}, arXiv
  e-prints (2025), arXiv:2508.10864.

\bibitem{SnapPy}
Marc Culler, Nathan~M. Dunfield, Matthias Goerner, and Jeffrey~R. Weeks,
  \emph{Snap{P}y, a computer program for studying the geometry and topology of
  $3$-manifolds}, Available at \url{http://snappy.computop.org}, 2023,
  accessed: November 2, 2023.

\bibitem{MR0140099}
R.~H. Fox, \emph{A quick trip through knot theory}, Topology of 3-manifolds and
  related topics ({P}roc. {T}he {U}niv. of {G}eorgia {I}nstitute, 1961),
  Prentice-Hall, Englewood Cliffs, N.J., 1962, pp.~120--167. \MR{0140099}

\bibitem{MR95876}
Ralph~H. Fox, \emph{Free differential calculus. {III}. {S}ubgroups}, Ann. of
  Math. (2) \textbf{64} (1956), 407--419. \MR{95876}

\bibitem{MR2450204}
Paolo Ghiggini, \emph{Knot {F}loer homology detects genus-one fibred knots},
  Amer. J. Math. \textbf{130} (2008), no.~5, 1151--1169. \MR{2450204}

\bibitem{2025arXiv250819669G}
Sudipta {Ghosh}, Zhenkun {Li}, and Juanita {Pinz{\'o}n-Caicedo},
  \emph{{$SU(2)$-representations of branched covers}}, arXiv e-prints (2025),
  arXiv:2508.19669.

\bibitem{MR267567}
C.~McA. Gordon, \emph{A short proof of a theorem of {P}lans on the homology of
  the branched cyclic coverings of a knot}, Bull. Amer. Math. Soc. \textbf{77}
  (1971), 85--87. \MR{267567}

\bibitem{2025arXiv250519573G}
Vaishnavi {Gupta} and Hitesh {Raundal}, \emph{{Biorderability of knot quandles
  of knots up to eight crossings}}, arXiv e-prints (2025), arXiv:2505.19573.

\bibitem{MR141106}
Wolfgang Haken, \emph{Theorie der {N}ormalfl\"{a}chen}, Acta Math. \textbf{105}
  (1961), 245--375. \MR{141106}

\bibitem{MR683753}
Richard Hartley, \emph{Identifying noninvertible knots}, Topology \textbf{22}
  (1983), no.~2, 137--145. \MR{683753}

\bibitem{MR4934408}
Alexander He, Eric Sedgwick, and Jonathan Spreer, \emph{A practical algorithm
  for knot factorisation}, 41st {I}nternational {S}ymposium on {C}omputational
  {G}eometry, LIPIcs. Leibniz Int. Proc. Inform., vol. 332, Schloss Dagstuhl.
  Leibniz-Zent. Inform., Wadern, 2025, pp.~Art. No. 55, 15. \MR{4934408}

\bibitem{MR3782416}
Matthew Hedden and Liam Watson, \emph{On the geography and botany of knot
  {F}loer homology}, Selecta Math. (N.S.) \textbf{24} (2018), no.~2, 997--1037.
  \MR{3782416}

\bibitem{MR2652542}
Chris Herald, Paul Kirk, and Charles Livingston, \emph{Metabelian
  representations, twisted {A}lexander polynomials, knot slicing, and
  mutation}, Math. Z. \textbf{265} (2010), no.~4, 925--949. \MR{2652542}

\bibitem{MR4497224}
Jennifer Hom, Adam~Simon Levine, and Tye Lidman, \emph{Knot concordance in
  homology cobordisms}, Duke Math. J. \textbf{171} (2022), no.~15, 3089--3131.
  \MR{4497224}

\bibitem{MR1670420}
Paul Kirk and Charles Livingston, \emph{Twisted {A}lexander invariants,
  {R}eidemeister torsion, and {C}asson-{G}ordon invariants}, Topology
  \textbf{38} (1999), no.~3, 635--661. \MR{1670420}

\bibitem{2025arXiv250208229K}
Teruaki {Kitano} and Yasuharu {Nakae}, \emph{{An extended symmetric union and
  its Alexander polynomial}}, arXiv e-prints (2025), arXiv:2502.08229.

\bibitem{MR1878556}
Serge Lang, \emph{Algebra}, third ed., Graduate Texts in Mathematics, vol. 211,
  Springer-Verlag, New York, 2002. \MR{1878556}

\bibitem{2025arXiv250922939M}
Rob {McConkey} and Luke~J {Seaton}, \emph{{{Satellite operations and
  $\theta$}}}, arXiv e-prints (2025), arXiv:2509.22939.

\bibitem{MR3325731}
Allison~H. Moore and Laura Starkston, \emph{Genus-two mutant knots with the
  same dimension in knot {F}loer and {K}hovanov homologies}, Algebr. Geom.
  Topol. \textbf{15} (2015), no.~1, 43--63. \MR{3325731}

\bibitem{MR2240902}
Yi~Ni, \emph{A note on knot {F}loer homology of links}, Geom. Topol.
  \textbf{10} (2006), 695--713. \MR{2240902}

\bibitem{MR2065507}
Peter Ozsv\'{a}th and Zolt\'{a}n Szab\'{o}, \emph{Holomorphic disks and knot
  invariants}, Adv. Math. \textbf{186} (2004), no.~1, 58--116. \MR{2065507}

\bibitem{MR0056923}
Antonio Plans, \emph{Contribution to the study of the homology groups of the
  cyclic ramified coverings corresponding to a knot}, Revista Acad. Ci. Madrid
  \textbf{47} (1953), 161--193 (5 plates). \MR{0056923}

\bibitem{MR717222}
K.~Reidemeister, \emph{Knot theory}, BCS Associates, Moscow, Idaho, 1983,
  Translated from the German by Leo F. Boron, Charles O. Christenson and Bryan
  A. Smith. \MR{717222}

\bibitem{reidemeister_knotentheorie}
Kurt Reidemeister, \emph{Knotentheorie}, Ergeb. Math. Grenzgeb., Springer,
  Berlin, 1932.

\bibitem{MR0515288}
Dale Rolfsen, \emph{Knots and links}, Publish or Perish, Inc., Berkeley,
  Calif., 1976, Mathematics Lecture Series, No. 7. \MR{0515288}

\bibitem{MR31733}
Horst Schubert, \emph{Die eindeutige {Z}erlegbarkeit eines {K}notens in
  {P}rimknoten}, S.-B. Heidelberger Akad. Wiss. Math.-Nat. Kl. \textbf{1949}
  (1949), no.~3, 57--104. \MR{31733}

\bibitem{MR82104}
\bysame, \emph{Knoten mit zwei {B}r\"ucken}, Math. Z. \textbf{65} (1956),
  133--170. \MR{82104}

\bibitem{tait1885knots3}
Peter~Guthrie Tait, \emph{On knots. {III}}, Transactions of the Royal Society
  of Edinburgh \textbf{32} (1885), 446--453.

\bibitem{SageMath}
{The Sage Developers}, \emph{{SageMath, the Sage Mathematics Software System
  (Version 9.4)}}, The Sage Development Team, 2021,
  \url{https://www.sagemath.org}.

\bibitem{MR4877260}
Morwen~B. Thistlethwaite, \emph{The enumeration and classification of prime
  20-crossing knots}, Algebr. Geom. Topol. \textbf{25} (2025), no.~1, 329--344.
  \MR{4877260}

\bibitem{MR143201}
H.~F. Trotter, \emph{Homology of group systems with applications to knot
  theory}, Ann. of Math. (2) \textbf{76} (1962), 464--498. \MR{143201}

\bibitem{MR1273784}
Masaaki Wada, \emph{Twisted {A}lexander polynomial for finitely presentable
  groups}, Topology \textbf{33} (1994), no.~2, 241--256. \MR{1273784}

\bibitem{2025arXiv250621105X}
Xiaoyu {Xu}, \emph{{On regularity of profinite isomorphisms between cusped
  hyperbolic 3-manifolds and the $A$-polynomial}}, arXiv e-prints (2025),
  arXiv:2506.21105.

\end{thebibliography}

\end{document}